\newtheorem{theorem}{Theorem}[section]
\newtheorem{proposition}[theorem]{Proposition}
\theoremstyle{definition}
\theoremstyle{definition}
\theoremstyle{remark}
\newtheorem{remark}[theorem]{Remark}
\newtheorem{example}[theorem]{Example}
\newcommand{\enorm}[1]{\ensuremath{|\!|\!|#1|\!|\!|}}
\newcommand{\norm}[1]{\ensuremath{\left\| #1\right\| }}
\newcommand{\mb}[1]{\ensuremath{\mathbf{#1}}}
\newcommand{\NN}{\mathbb N}
\newcommand{\QQ}{\mathbb Q}
\newcommand{\RR}{\mathbb R}
\newcommand{\spec}{\mathrm{Spec}}
\newcommand{\cA}{\mathcal{A}}
\newcommand{\cE}{\mathcal E}
\newcommand{\cF}{\mathcal F}
\newcommand{\cH}{\mathcal H}
\newcommand{\cT}{\mathcal T}
\author[S. Giani]{Stefano Giani} \address{Durham University, School of Engineering and Computing Sciences, South Road, Durham DH1 3LE, United Kingdom}
\email{stefano.giani@durham.ac.uk}
\author[L. Grubi\v{s}i\'{c}]{Luka Grubi\v{s}i\'{c}} \address{University of Zagreb, Department of Mathematics, Bijeni\v{c}ka 30, 10000 Zagreb, Croatia}
\email{luka@math.hr}
\author[H. Hakula]{Harri Hakula} \address{Department of Mathematics and Systems Analysis, Aalto University, Finland}
\email{harri.hakula@aalto.fi}
\author[J. Ovall]{Jeffrey S. Ovall} \address{Jeffrey S. Ovall,
  Fariborz Maseeh Department of Mathematics and Statistics, Portland
  State University, Portland, OR 97201}
\email{jovall@pdx.edu}
\begin{document}
\title[Error Estimates for Eigenvalue Problems]{A Posteriori Error Estimates for Elliptic Eigenvalue Problems Using
  Auxiliary Subspace Techniques} \date{\today}

\begin{abstract}
  We propose an a posteriori error estimator for
  high-order $p$- or $hp$-finite element discretizations of
  selfadjoint linear elliptic eigenvalue problems that is appropriate
  for estimating the error in the approximation of an eigenvalue cluster
  and the corresponding invariant subspace.  The estimator is based on
  the computation of approximate error functions in a space that
  complements the one in which the approximate eigenvectors were
  computed.  These error functions are used to construct estimates of
  collective measures of error, such as the Hausdorff distance between
  the true and approximate clusters of eigenvalues, and the subspace gap
  between the corresponding true and approximate invariant subspaces.
  Numerical experiments demonstrate the practical effectivity of the
  approach.
\end{abstract}

\maketitle

\section{Introduction}\label{Intro}

This paper concerns the a posteriori estimation of error in high-order
($p$ or $hp$) finite element approximations of eigenvalues and
invariant subspaces for variational eigenvalue problems of the form:
Find $(\lambda,\psi)\in \RR\times \cH$, $\psi\neq 0$, satisfying
\begin{align}\label{VarEig}
\underbrace{\int_\Omega A\nabla\psi\cdot\nabla v+b\psi v\,dx}_{B(\psi,v)}=\lambda\underbrace{\int_\Omega
  \psi v\,dx}_{(\psi,v)}\mbox{ for all }v\in\cH~,
\end{align}
where $\Omega\subset\RR^d$ is open and bounded, and $\cH\subset
H^1(\Omega)$ incorporates homogeneous Dirichlet, Neumann, or mixed
Dirichlet/Neumann boundary conditions. Standard assumptions on the
coefficients $A\in [L^\infty(\Omega)]^{d\times d}$ and $b\in
L^\infty(\Omega)$ ensure that $B$ is an inner-product on $\cH$, whose
induced ``energy'' norm, $\enorm{v}=\sqrt{B(v,v)}$, is equivalent to
the standard norm on $H^1(\Omega)$, $\|v\|_1$.  We also use $\|v\|_0$
to denote the standard norm on $L^2(\Omega)$.

We will compute a collection of approximate eigenvalues and
eigenvectors using either $p$ or $hp$ finite element discretizations
(see Section~\ref{HPDiscretization} for details).  Let $V\subset\cH$
denote such a finite element space.  The corresponding discrete
version of~\eqref{VarEig} is: Find $(\hat\lambda,\hat\psi)\in \RR\times V$, $\hat\psi\neq 0$ satisfying
\begin{align}\label{DiscVarEig}
B(\hat\psi,v)=\hat\lambda(\hat\psi,v)\mbox{ for all }v\in V~.
\end{align}
For convenience, we state a few well-known results concerning the
solutions of~\eqref{VarEig} and~\eqref{DiscVarEig}.  
\begin{enumerate}
\item The problem~\eqref{VarEig} admits countably many solutions
  $\{(\lambda_n,\psi_n):\,n\in\NN\}$, such that
\begin{enumerate}
\item $0< \lambda_1 <\lambda_2\leq \lambda_3\leq \cdots$, and
  $\{\lambda_n\}$ has no finite accumulation points;
\item $\{\psi_n\}$ is an orthonormal Hilbert basis of $L^2(\Omega)$.
\end{enumerate}
\item The problem~\eqref{DiscVarEig} admits $N=\dim(V)$ solutions
  $\{(\hat\lambda_n,\hat\psi_n):\,1\leq n\leq N\}$, such that
\begin{enumerate}
\item $0< \hat\lambda_1 \leq\hat\lambda_2\leq \cdots\leq\hat\lambda_N$;
\item $\{\hat\psi_n\}$ is an $L^2(\Omega)$-orthonormal basis of $V$.
\end{enumerate}
\item $\lambda_n\leq \hat\lambda_n$ for $1\leq n\leq N$.
\end{enumerate}

One feature of eigenvalue problems that complicates the estimation of
error is the possibility of repeated or tightly-clustered eigenvalues,
which arise very naturally in domains with symmetries or
near-symmetries, and will heavily feature in our numerical
experiments.  When such eigenvalues are to be approximated in
practice, it may make little sense to try to determine whether
computed eigenvalue approximations that are very close to each other
are all approximating the same (repeated) eigenvalue, or approximating
eigenvalues that just happen to be very close to each other.  In this
case, it is best to estimate eigenvalue error and associated invariant
subspace error in a ``collective sense'', as described in
Section~\ref{Theory}.  Let us briefly outline approaches to
``collective'' eigenvalue estimates in the literature. First there is
an approach using majorization inequalities championed by A. Knyazev
in a series of papers, see for instance \cite{Knyazev2009} and the
references therein.  Majorization inequalities yield optimal estimates
for clusters of eigenvalues on the extreme portions of the spectrum
and Knyazev's approach is focused on a priori estimates. See also the
notion of cluster robustness from \cite{Ovtchinnikov2006} in the
context of a posteriori estimates. These estimates are optimal for the
eigenvalues on the boundary of the spectrum and involve only
``diagonal part'' or ``trace'' of the subspace residual, see Section
\ref{Theory} for more details.  An alternative approach involves the
use of Hausdorff distance between the ``matched'' groups of
eigenvalues and their approximants, as well as a measure of the
subspace gap between the true invariant subspace and its
approximation, see~\cite{Boffi2017}.  As with~\cite{Boffi2017}, we are
principally interested in a posteriori estimates of error measured in
Hausdorff distance (for eigenvalues) and subspace gap (for
eigenvectors), but both our analysis and the practical realization of
the estimators take on a very different form. 

As is the case with solutions of source
problems (boundary value problems), eigenvectors can have
singularities due to domain geometry and/or discontinuities in the
differential operator or boundary conditions, and the types and
severity of singularities that can occur are
well-understood~\cite{Grisvard1992,Kozlov1997,Wigley1964}.  Unlike
source problems, where the strongest singular behavior that can be
present is typically seen in practice, with eigenvalue problems, the
regularity of eigenvectors varies (dramatically) depending on where
you are in the spectrum, as illustrated in the following example.
We consider this example in detail, first focusing on an eigenvalue
cluster of mixed regularity and illustrating the notion of
``mixing of eigenmodes'' at different levels of discretization, and later
revisiting it to demonstrate the performance (effectivity) of our a posteriori error
estimates---the eigenvalues and vectors are known, so the errors and
error estimates can be directly compared. 

\begin{figure}
	\centering
	\subfloat[{$p=4$: $\hat\psi_{52}\approx\psi_{41}$.}]
	{\includegraphics[width=0.24\textwidth]{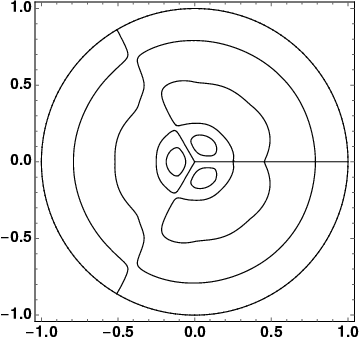}}
	\subfloat[{$p=5$: $\hat\psi_{52}\approx\psi_{55}$.}]
	{\includegraphics[width=0.24\textwidth]{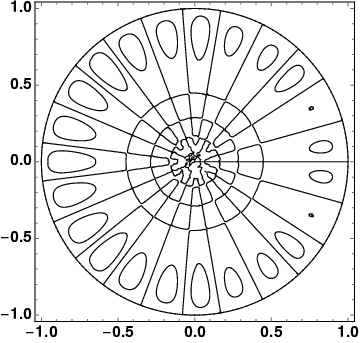}}
	\subfloat[{$p=6$: $\hat\psi_{52}\approx\psi_{55}$.}]
	{\includegraphics[width=0.24\textwidth]{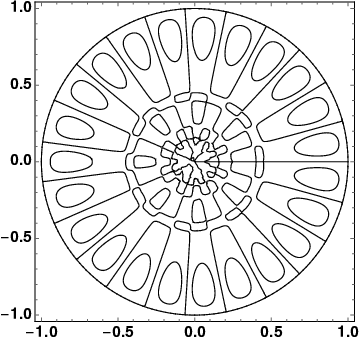}}
	\subfloat[{$p=7$: $\hat\psi_{52}\approx\psi_{52}$.}]
	{\includegraphics[width=0.24\textwidth]{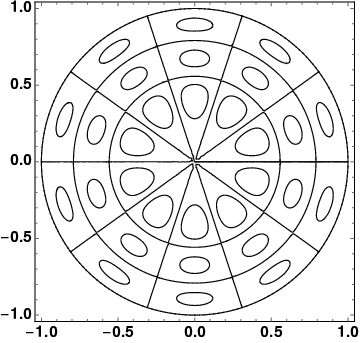}}\\
	\subfloat[{$p=4$: $\hat\psi_{53}\approx\psi_{59}$.}]
	{\includegraphics[width=0.24\textwidth]{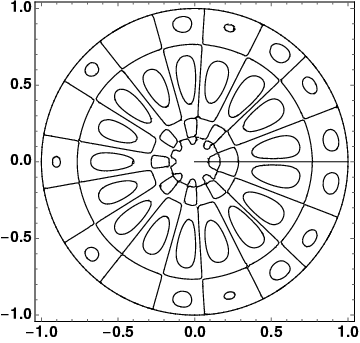}}
	\subfloat[{$p=5$: $\hat\psi_{53}\approx\psi_{52}$.}]
	{\includegraphics[width=0.24\textwidth]{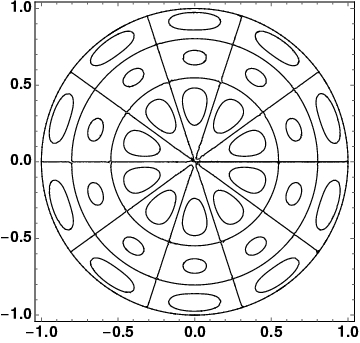}}
	\subfloat[{$p=6$: $\hat\psi_{53}\approx\psi_{52}$.}]
	{\includegraphics[width=0.24\textwidth]{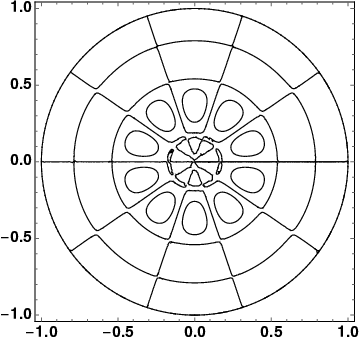}}
	\subfloat[{$p=7$: $\hat\psi_{53}\approx\psi_{53}$.}]
	{\includegraphics[width=0.24\textwidth]{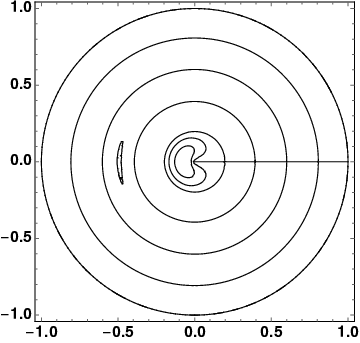}}
	\caption{Contour plots of computed eigenvectors $\hat\psi_{52}$ (top row) and
          $\hat\psi_{53}$ (bottom row) on a sequence of increasingly fine discretizations.
	}\label{fig:SlitDiskConvergence52_53}
\end{figure}

\begin{example}[Slit Disk]\label{SlitDiskExample}
Let $\Omega\subset\RR^2$ be the unit disk with the positive $x$-axis
removed, and consider the
Laplace eigenvalue problem:
\begin{align*}
-\Delta\psi=\lambda\psi\mbox{ in }\Omega\quad,\quad \psi=0\mbox{ on }\partial\Omega
\end{align*}
The eigenvalues and vectors are known explicitly (cf.~\cite{Kuttler1984}), and are
doubly-indexed for $m,n\in\NN$ by
\begin{align}\label{eq:exactmodes}
\psi_{m,n}=J_{n/2}(j_{m,n}r)\sin(n\theta/2)\quad,\quad \lambda_{m,n}=j_{m,n}^2~,
\end{align}
where $J_{n/2}$ is the first-kind Bessel function of order $n/2$ and
$j_{m,n}$ is the $m$th positive root of $J_{n/2}$; $r\in[0,1]$ and $\theta\in[0,2\pi]$
are the usual polar coordinates.  
Since $J_{1/2}(z)=\sqrt{2/(\pi
  z)}\,\sin z$, we see that $\lambda_{m,1}=(m\pi)^2$, and
$\psi_{m,1}\in H^{3/2-\epsilon}(\Omega)$ only for $\epsilon>0$.

It is well-known that, when $\nu\in\QQ$ and $\ell\in\NN$, then $J_{\nu}$
and $J_{\nu+\ell}$ have no common positive roots
(cf~\cite[pp. 484-485]{Watson1995}), and that the positive roots of
Bessel functions are simple.  
It follows from the first of these assertions that
$J_{n/2}$ and $J_{n'/2}$ have no common positive roots when $n$ and
$n'$ have the same parity, but it does not rule out that they may have
common positive roots when  $n$ and
$n'$ do not have the same parity.  We have not determined whether or not
all eigenvalues in this example are simple, but we have verified that
at least the first 100 are, which will be sufficient for our purposes.
If the eigenvalues are ordered in an increasing sequence as described
above, this induces a natural mapping $(m,n)\mapsto k$ from index
pairs to absolute indices.  For $k\leq 100$ we know that this map is
invertible, with $52\mapsto (3,10)$ and $53\mapsto (5,1)$, for example.
Contour plots of $\psi_{3,10}=\psi_{52}$ and $\psi_{5,1}=\psi_{53}$
are given, together with their corresponding eigenvalues, in
Figure~\ref{fig:SlitDiskEig52_53}.  This illustrates that eigenmodes
associated with eigenvalues that are relatively close to each other
can have very different regularities; $\psi_{53}\in
H^{3/2-\epsilon}(\Omega)$ only for $\epsilon>0$, but $\psi_{52}\in
H^{\ell}(\Omega)$ for all $\ell$.
\begin{figure}
	\centering
	\subfloat[Eigenvector $\psi_{52}$.]
	{\label{fig:SlitDisk52}\includegraphics[width=0.45\textwidth]{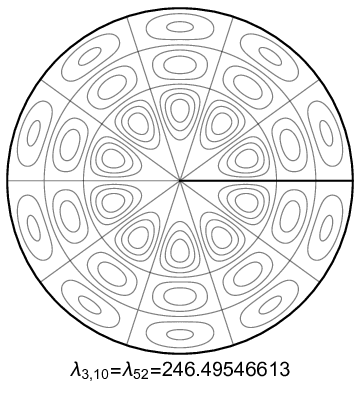}}\quad
	\subfloat[Eigenvector $\psi_{53}$.]
	{\label{fig:SlitDisk53}\includegraphics[width=0.45\textwidth]{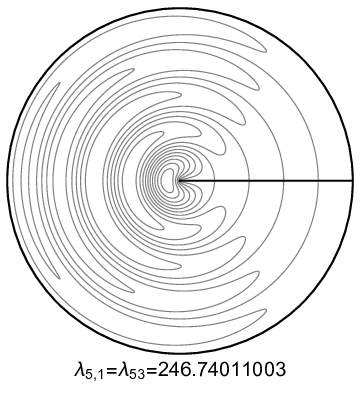}}\\
	\subfloat[Eigenvector error $e_{52}$ and approximation $\varepsilon_{52}$.]
	{
	\includegraphics[width=0.45\textwidth]{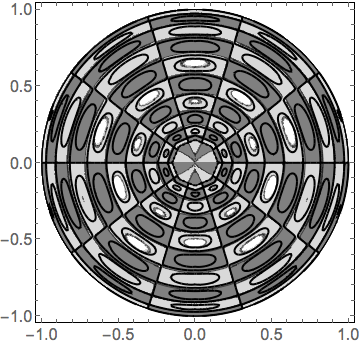}
	}\quad
	\subfloat[Eigenvector error $e_{53}$ and approximation $\varepsilon_{53}$.]
	{
	\includegraphics[width=0.45\textwidth]{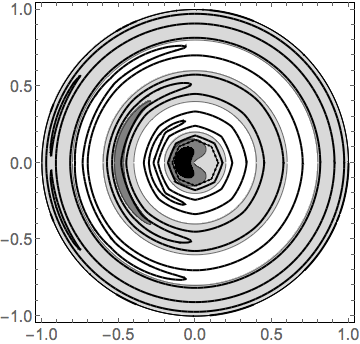}
	}
	\caption{Two consecutive eigenvectors $\psi_j$ for the slit
          disk, together with finite element errors
          $e_j=\psi_j-\hat\psi_j$ and their approximations
          $\varepsilon_j\approx e_j$.  The eigenfunction error $e_j$ is
          given as a greyscale contour plot, and thick black contour
          lines of its approximation
          $\varepsilon_j$ are overlaid.}\label{fig:SlitDiskEig52_53}
\end{figure}

We also use this example to illustrate a \textit{mixing of modes} that
may occur in eigenvalue/vector approximations.  In
Figure~\ref{fig:SlitDiskConvergence52_53} we show contour plots of the
computed eigenvectors $\hat\psi_{52}$ and $\hat\psi_{53}$ corresponding
to $\hat\lambda_{52}$ and $\hat\lambda_{53}$, for a sequence of
increasingly fine discretizations that will be described in
Section~\ref{HPDiscretization}.  The computed eigenvectors are then
identified with the true eigenvectors \textit{they most closely
  resemble}, based on analysis of their behavior (e.g. sign changes)
in both the angular and radial directions.  We describe this procedure
in greater detail later.  We note that $\psi_{53}=\psi_{5,1}$ is approximated by
$\hat\psi_{53}$ only on the finest of these discretizations, whereas
$\psi_{52}=\psi_{3,10}$ is approximated by $\hat\psi_{53}$ on two of
the discretizations, and only moves into its proper position on the
finest discretization---compare with
Figure~\ref{fig:SlitDiskEig52_53}.
In its progression toward approximating $\psi_{52}$, $\hat\psi_{52}$
approximates $\psi_{41}=\psi_{4,3}$ on the coarsest of the
discretizations, and $\psi_{55}=\psi_{1,23}$ on the next two
discretizations.  Similarly, $\hat\psi_{53}$ approximates
$\psi_{59}=\psi_{2,17}$ on the coarsest discretization, and $\psi_{52}$
on the next two discretizations.  The computed approximations
$\hat\lambda_{52}$ and $\hat\lambda_{53}$ both decrease monotonically
toward their respective values $\lambda_{52}$ and $\lambda_{53}$ as
the discretizations are enriched, as they should, with
with $\hat\lambda_{52}=317.923$ and $\hat\lambda_{53}=318.275$ on the
coarsest of the discretizations ($p=4$), and $\hat\lambda_{52}=247.941$ and $\hat\lambda_{53}=250.782$ on the
finest of the discretizations ($p=7$) used for Figure~\ref{fig:SlitDiskConvergence52_53}.

Having identified how fine our discretizations must be in order to
properly identify $\hat\psi_{52}$ and $\hat\psi_{53}$ with $\psi_{52}$
and $\psi_{53}$, we highlight a feature of the error estimation
technique that we propose.  Our approach to error estimation in the
eigenvalue context is based on related work for source
problems~\cite{HNO}, in that eigenvector errors are approximated
\textit{as functions} in an auxiliary space that, in a practical
sense, complements the finite element space in which the eigenvectors
are approximated.  For related work in the context of low-order finite element eigenvalue/vector 
approximations, we refer to~\cite{Grubisic2009,Bank2013}.
Appropriate norms of such approximate error
functions provide the basis for estimating eigenvalue and invariant
subspace errors.  Because we compute approximate eigenvector error
functions, we can provide qualitative, as well as quantitative
estimates of error.  To illustrate this point, we compute approximate
eigenvectors $\hat\psi_{52},\hat\psi_{53}$ in suitable finite element
spaces, and provide contour plots of the errors
$e_j=\psi_{j}-\hat\psi_{j}$ and approximate errors
$\varepsilon_j\approx e_j$, also in Figure~\ref{fig:SlitDiskEig52_53}.
The functions $\psi_{j}$ and $\hat\psi_{j}$ have been normalized so
that $\|\psi_{j}\|_0=\|\hat\psi_{j}\|_0=1$ and $\hat\psi_{j}$ is a
better approximation of $\psi_{j}$ than is $-\hat\psi_{j}$.  
The mesh used for these computations, shown in
Figure~\ref{fig:SlitDisk}, resolves these modes close to the origin with
errors that are an order of magnitude smaller than those a bit farther
away.  Because of this, for visual clarity we have omitted the
contours of $\varepsilon_j$ in the central portion of Figures~\ref{fig:SlitDiskEig52_53}(c)-(d).
\end{example}

The rest of the paper is organized as follows.  In
Section~\ref{Theory} we present general results concerning estimation
of error that is suitable for clusters of eigenvalues and their
corresponding invariant subspaces.  In Section~\ref{HPDiscretization},
we describe the $p$- and $hp$-finite element spaces that are used in this
work, the technique we have used to identify computed eigenmodes with
true eigenmodes when the latter are known (as was done in
Example~\ref{SlitDiskExample}), and our approach for a posteriori
error estimation in this context.  We provide a detailed case study in
Section~\ref{sec:Exper} of examples having many clustered eigenvalues
throughout the spectrum, which were constructed taking a pair of
\textit{isospectral drums} and connecting them in various ways with
narrow bridges.  We focus on 2D problems in
Sections~\ref{HPDiscretization} and~\ref{sec:Exper}, but we
emphasize that the theoretical development in Section~\ref{Theory}
is not dimension-dependent.
\section{Theoretical Results}\label{Theory}

It will be convenient for the development of the error estimates to
express~\eqref{VarEig} in terms of operators.  The bilinear form
defines an operator $\cA$ by a representation theorem of
Friedrichs~\cite{Friedrichs1934} (see also~\cite[Chapter 6, Theorem
2.1]{KatoBook}), such that $(\cA v,w)=B(v,w)$ for all
$v\in\mathrm{Dom}(\cA)\subset\cH$ and $w\in \cH$, and we write
$ \cA v=-\nabla\cdot A\nabla v+bv $.  The operator $\cA$ is
self-adjoint and positive definite, and is typically viewed as an
unbounded operator on $L^2(\Omega)$.  The variational eigenvalue
problem~\eqref{VarEig} is equivalent to the operator eigenvalue
problem: Find $(\lambda,\psi)\in\RR\times\mathrm{Dom}(\cA)$,
$\psi\neq 0$, such that $\cA\psi=\lambda\psi$.  A second
representation theorem (see~\cite[Chapter 6, Theorem 2.23]{KatoBook})
expresses the bilinear form in terms of the self-adjoint and positive
definite square-root of $\cA$, $\cA^{1/2}$ (see~\cite[Chapter 5,
Theorem 3.35]{KatoBook}),
\begin{align*}
  B(v,w)=(\cA^{1/2}v,\cA^{1/2}w)\mbox{ for all }v,w\in\mathrm{Dom}(\cA^{1/2})=\cH~,
\end{align*}
and we see that $\enorm{v}=\|\cA^{1/2}v\|_0$. 

Let $\spec(\cA)$ denote the spectrum of $\cA$.  Given a finite subset
$\Lambda\subset\spec(\cA)$, let
\begin{align*}
E(\Lambda)=\mathrm{span}\{\psi\in\mathrm{Dom}(\cA):\,\cA\psi=\lambda\psi\mbox{
  for some }\lambda\in\Lambda\}
\end{align*}
be the associated invariant subspace.  Let $S(\Lambda)$ be the
$L^2(\Omega)$-orthogonal projector onto $E(\Lambda)$.  When
$\Lambda=\{\lambda\}$, we use $E(\lambda)$ and $S(\lambda)$.
It is well-known that $S(\Lambda)$ is also the orthogonal
projector onto $E(\lambda)$ with respect to the energy inner-product.
These orthogonal projection properties are stated as best
approximation results in the following proposition.
\begin{proposition}\label{ClosestPoint}
  Let $\Lambda\subset\spec(\cA)$ be a finite set, $E=E(\Lambda)$ and
  $S=S(\Lambda)$.  For any $v\in\cH$, it holds that
  $\norm{(I-S)v}=\inf_{w\in E}\norm{v-w}$, where $\norm{\cdot}$
  denotes either the $L^2$ or energy norm.  In the case of the $L^2$
  norm, we may allow $v\in L^2(\Omega)$.
\end{proposition}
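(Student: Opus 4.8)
The plan is to reduce the statement to the classical Hilbert-space best-approximation (projection) theorem: if $P$ is the orthogonal projector onto a closed subspace $W$ of an inner-product space, then $v-Pv$ is orthogonal to $W$, hence $\norm{v-w}^2=\norm{v-Pv}^2+\norm{Pv-w}^2\geq\norm{v-Pv}^2$ for every $w\in W$, with equality precisely at $w=Pv$; thus $\inf_{w\in W}\norm{v-w}=\norm{v-Pv}$ and the infimum is attained. The entire task is therefore to confirm that $S=S(\Lambda)$ is the orthogonal projector onto $E=E(\Lambda)$ \emph{for each of the two inner products}, and that the relevant quantities are finite.

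First I would reindex the eigenpairs so that $\{\psi_n:\lambda_n\in\Lambda\}$ is an $L^2(\Omega)$-orthonormal family of eigenvectors spanning $E$. Since $\Lambda$ is finite and each eigenvalue has finite multiplicity (the $\lambda_n$ have no finite accumulation point), $E$ is finite-dimensional, hence closed both in $L^2(\Omega)$ and in $\cH$ equipped with the energy norm. By definition $Sv=\sum_{\lambda_n\in\Lambda}(v,\psi_n)\psi_n$, which is meaningful for $v\in L^2(\Omega)$ and is manifestly the $L^2$-orthogonal projector onto $E$; moreover $Sv\in E\subset\Dom(\cA)\subset\cH$, so $S$ also maps $\cH$ into $\cH$. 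Applying the projection theorem above with $W=E$ and the $L^2$ inner product settles the $L^2$ statement, with $v$ allowed to range over all of $L^2(\Omega)$.

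For the energy norm I would show that $v-Sv$ is $B$-orthogonal to $E$ whenever $v\in\cH$; it suffices to check $B(v-Sv,\psi_m)=0$ for each basis eigenvector $\psi_m$. Using symmetry of $B$ and the eigenrelation, $B(v,\psi_m)=B(\psi_m,v)=\lambda_m(\psi_m,v)=\lambda_m(v,\psi_m)$, while $B(Sv,\psi_m)=\sum_n(v,\psi_n)B(\psi_n,\psi_m)=\sum_n(v,\psi_n)\lambda_n(\psi_n,\psi_m)=\lambda_m(v,\psi_m)$ by $L^2$-orthonormality of the $\psi_n$. Subtracting gives $B(v-Sv,\psi_m)=0$, so $S$ is the $B$-orthogonal projector onto $E$, and the projection theorem applied with the energy inner product yields $\enorm{(I-S)v}=\inf_{w\in E}\enorm{v-w}$. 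All terms here are well-defined since $v\in\cH=\Dom(\cA^{1/2})$ and $Sv,w\in E\subset\cH$.

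I do not expect a genuine obstacle: once $S$ has been identified as the orthogonal projector, the result is the textbook projection theorem. The only points requiring care are bookkeeping---verifying that $E$ is finite-dimensional and therefore closed (so that infima are attained and the orthogonal decompositions are legitimate), verifying that $S$ leaves $\cH$ invariant so that $\enorm{(I-S)v}$ makes sense, and keeping straight which inner product each orthogonality assertion refers to.
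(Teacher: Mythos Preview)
Your argument is correct and is exactly the standard verification one would give: check that $S$ is the orthogonal projector onto the finite-dimensional (hence closed) space $E$ with respect to both inner products, then invoke the Hilbert-space projection theorem. The paper itself does not supply a proof of this proposition---it is stated as a well-known fact, preceded by the remark that ``$S(\Lambda)$ is also the orthogonal projector onto $E(\lambda)$ with respect to the energy inner-product''---so your write-up fills in precisely the details the authors chose to omit.
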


Taking $\Lambda$ and $S=S(\Lambda)$ as above, let $\hat\mu\not\in
\spec(\cA)\setminus\Lambda$ be a non-zero real number, and $\hat\phi\in\cH$.
It can be seen in the proof of~\cite[Proposition 2]{GGMO} that
\begin{align}\label{KeyIdentity}
(I-S)\hat\phi=-[\cA^{1/2}(\hat\mu-\cA')^{-1}(I-S)][\cA^{1/2}(\hat\phi-\cA^{-1}(\hat\mu\hat\phi))]~
\end{align}
where $\cA'=\cA(I-S)$.  It follows that
\begin{align*}
\enorm{(I-S)\hat\phi}=\|\cA^{1/2}(I-S)\hat\phi\|_0=
\|\cA(\hat\mu-\cA')^{-1}(I-S)][\cA^{1/2}(\hat\phi-\cA^{-1}(\hat\mu\hat\phi))\|_0~.
\end{align*}
Since all of the operators in~\eqref{KeyIdentity} commute, we also
have
\begin{align*}
\|(I-S)\hat\phi\|_0=\|\cA(\hat\mu-\cA')^{-1}(I-S)][(\hat\phi-\cA^{-1}(\hat\mu\hat\phi))\|_0~.
\end{align*}
From these identities, we obtain the estimates

\begin{align}\label{EigenvectorErrors1}
\|(I-S)\hat\phi\|\leq&C(\hat\mu,\Lambda)
\|\hat\phi-\cA^{-1}(\hat\mu\hat\phi)\|~,
\end{align}
where $\norm{\cdot}$ denotes either the $L^2$ or energy norms, and
the constant $C(\hat\mu,\Lambda)$ is given by
\begin{align}\label{ErrorConstant1}
C(\hat\mu,\Lambda)=\|\cA(\hat\mu-\cA')^{-1}(I-S)\|_0=\|\cA'(\hat\mu-\cA')^{-1}\|_0=
\max_{\xi\in(\spec{\cA}\setminus\Lambda)\cup\{0\}}\frac{\xi}{|\xi-\hat\mu|}~.
\end{align}
The final identity can be found, for example, in~\cite[Chapter 5,
Section 3.5]{KatoBook}, and uses the fact that $\spec{\cA'}=(\spec{\cA}\setminus\Lambda)\cup\{0\}$. 
If $\Lambda=\{\lambda\}$, we use
$C(\hat\mu,\lambda)$ for this constant.

Now let $E=E(\Lambda)$, with $\dim E=r$.  Suppose we are given a real subspace
$\hat{E}\subset\cH$ of dimension $r$, as well as an $r$-tuple of positive
numbers $(\hat\mu_1,\ldots,\hat\mu_r)$ with
$\hat\mu_i\not\in\spec(\cA)\setminus\Lambda$, and $\hat\mu_1\leq\cdots\leq\hat\mu_r$.  
Taking $\{\hat{\phi}_1,\ldots,\hat{\phi}_r\}$ as a basis of $\hat{E}$,
we identify $\hat\mu_i$ with
$\hat\phi_i$.  It is natural to think of
$\hat\Lambda=\{\hat\mu_1,\ldots,\hat\mu_r\}$ and $\hat{E}$ as
approximations of $\Lambda$ and $E$ obtained by an $hp$-finite element
procedure, and we will do so later, but for now we work with the given
level of generality.  Of particular interest in our discussion is the
relative error in energy norm between $\hat{v}\in\hat{E}$ and its
projection $S\hat{v}\in E$.  Letting $G,H\in\mathbb{R}^{r\times r}$ be
the Gram matrices given by
\begin{align}\label{GramMatrices}
G_{ij}=B(\hat{\phi}_j,\hat{\phi}_i) \quad,\quad H_{ij}=B((I-S)\hat{\phi}_j,(I-S)\hat{\phi}_i) ~,
\end{align}
and $\mb{v}\in\RR^r$ be the coefficient vector of $\hat{v}$ with
respect to the (ordered) basis $(\hat{\phi}_1,\ldots,\hat{\phi}_r)$, we have
\begin{align}\label{RelativeErrorIdentity}
\frac{\enorm{(I-S)\hat v}^2}{\enorm{\hat v}^2}=\frac{\mb{v}^t H\mb{v}}{\mb{v}^t G\mb{v}}~.
\end{align}
This naturally leads to our first key result.

\begin{theorem}\label{TraceTheorem}  We have the eigenvector
        error trace estimate 
	\begin{align}\label{TraceEstimate}
	\sup_{\hat v\in\hat{E}}\frac{\enorm{(I-S)\hat v}^2}{\enorm{\hat v}^2}\leq\frac{[C(\hat\Lambda,\Lambda)]^2}{\lambda_{\min}(G)}
	\,\sum_{j=1}^r\enorm{\hat{\phi}_j-\cA^{-1}(\hat\mu_j\hat{\phi}_j)}^2~,
	\end{align}
	where
        $C(\hat\Lambda,\Lambda)=\max\{C(\hat\mu_j,\Lambda):\,1\leq
        j\leq r\}$.  If we further assume that
        $B(\hat\phi_i,\hat\phi_j)=\hat\mu_i\delta_{ij}$, then we have the following
        modification of~\eqref{TraceEstimate},
	\begin{align}\label{TraceEstimate2}
         \sup_{\hat v\in\hat{E}} \frac{\enorm{(I-S)\hat v}^2}{\enorm{\hat v}^2}\leq [C(\hat\Lambda,\Lambda)]^2
          \,\sum_{j=1}^r\frac{\enorm{\hat{\phi}_j-\cA^{-1}(\hat\mu_j\hat{\phi}_j)}^2}{\hat\mu_j}~,
	\end{align}  
as well as the eigenvalue error trace estimate,
\begin{align}\label{EigenvalueTrace}
\sum_{j=1}^r(\hat\mu_j-\mu_j)\leq [C(\hat\Lambda,\Lambda)]^2
          \,\sum_{j=1}^r\enorm{\hat{\phi}_j-\cA^{-1}(\hat\mu_j\hat{\phi}_j)}^2~,
\end{align}
where $\Lambda=\{\mu_1,\ldots,\mu_r\}$, with $\mu_1\leq\cdots\leq\mu_r$.
\end{theorem}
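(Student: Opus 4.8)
The plan is to read off $\sup_{\hat v\in\hat E}\enorm{(I-S)\hat v}^2/\enorm{\hat v}^2$ from the Rayleigh-quotient identity~\eqref{RelativeErrorIdentity} as $\max_{\mb v\neq 0}(\mb v^tH\mb v)/(\mb v^tG\mb v)$, and then to bound the numerator $\mb v^tH\mb v=\enorm{\sum_j v_j(I-S)\hat\phi_j}^2$ by the triangle inequality in the energy norm followed by a Cauchy--Schwarz inequality in $\RR^r$. For~\eqref{TraceEstimate} the plain estimate $\enorm{\sum_j v_j(I-S)\hat\phi_j}^2\le|\mb v|^2\sum_j\enorm{(I-S)\hat\phi_j}^2$ together with $\mb v^tG\mb v\ge\lambda_{\min}(G)|\mb v|^2$ does the job; for~\eqref{TraceEstimate2}, where the hypothesis forces $G=\mathrm{diag}(\hat\mu_1,\dots,\hat\mu_r)$, I would instead write $|v_j|\,\enorm{(I-S)\hat\phi_j}=(\sqrt{\hat\mu_j}\,|v_j|)\big(\enorm{(I-S)\hat\phi_j}/\sqrt{\hat\mu_j}\big)$ before applying Cauchy--Schwarz, so that the weight $\mb v^tG\mb v=\sum_j\hat\mu_j v_j^2$ in the denominator cancels exactly and the factor $1/\hat\mu_j$ appears. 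In either case the proof finishes by applying the energy-norm case of~\eqref{EigenvectorErrors1}, namely $\enorm{(I-S)\hat\phi_j}\le C(\hat\mu_j,\Lambda)\,\enorm{\hat\phi_j-\cA^{-1}(\hat\mu_j\hat\phi_j)}$, term by term, and bounding $C(\hat\mu_j,\Lambda)\le C(\hat\Lambda,\Lambda)$.

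For the eigenvalue trace estimate~\eqref{EigenvalueTrace} I would decompose $\hat\phi_j=S\hat\phi_j+(I-S)\hat\phi_j$ and use that $S$ is the $B$-orthogonal projector onto $E$, so that the hypothesis $B(\hat\phi_j,\hat\phi_j)=\hat\mu_j$ gives $\hat\mu_j=\enorm{S\hat\phi_j}^2+\enorm{(I-S)\hat\phi_j}^2$; summing over $j$ reduces the claim to $\sum_j\enorm{S\hat\phi_j}^2\le\sum_j\mu_j$. To prove this I would fix an $L^2$-orthonormal eigenbasis $\{\psi_1,\dots,\psi_r\}$ of $E$ with $\cA\psi_k=\mu_k\psi_k$, set $c_{jk}=(\hat\phi_j,\psi_k)$ so that $S\hat\phi_j=\sum_k c_{jk}\psi_k$ and hence $\enorm{S\hat\phi_j}^2=\sum_k\mu_k c_{jk}^2$, and then compute $\sum_j\enorm{S\hat\phi_j}^2=\sum_k\mu_k\big(\sum_j c_{jk}^2\big)\le\sum_k\mu_k$, the inequality holding because each $\mu_k>0$ and, by Bessel's inequality for the $L^2$-orthonormal family $\{\hat\phi_j\}$, $\sum_j c_{jk}^2=\sum_j(\hat\phi_j,\psi_k)^2\le\|\psi_k\|_0^2=1$. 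This yields $\sum_j(\hat\mu_j-\mu_j)\le\sum_j\enorm{(I-S)\hat\phi_j}^2$, and one last application of the energy-norm case of~\eqref{EigenvectorErrors1} with $C(\hat\mu_j,\Lambda)\le C(\hat\Lambda,\Lambda)$ gives~\eqref{EigenvalueTrace}. Note that, since both sides only involve sums over all of $\Lambda$, the particular matching of $\hat\mu_j$ with $\mu_j$ plays no role here.

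The one genuinely new step is the reduction of~\eqref{EigenvalueTrace} to the bound $\sum_j\enorm{S\hat\phi_j}^2\le\sum_j\mu_j$: after splitting off the residual part $\enorm{(I-S)\hat\phi_j}^2$, what remains is precisely the statement that, summed against each eigenvector $\psi_k$, the $L^2$-masses $\sum_j(\hat\phi_j,\psi_k)^2$ of the orthonormal family $\{\hat\phi_j\}$ do not exceed one. This is where the $L^2$-orthonormality of the $\hat\phi_j$ (implicit in normalising them so that $B(\hat\phi_i,\hat\phi_j)=\hat\mu_i\delta_{ij}$, as for Rayleigh--Ritz pairs) is essential, and it is the step I would state most carefully; everything else is the triangle inequality, Cauchy--Schwarz, the identity~\eqref{RelativeErrorIdentity}, and the already-established bound~\eqref{EigenvectorErrors1}.
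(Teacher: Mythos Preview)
Your argument is correct and matches the paper's proof essentially step for step. The only cosmetic difference is in the first two estimates: the paper phrases the bound on the Rayleigh quotient~\eqref{RelativeErrorIdentity} directly as the matrix inequalities $\lambda_{\max}(G^{-1}H)\le\mathrm{trace}(H)/\lambda_{\min}(G)$ and $\lambda_{\max}(G^{-1}H)\le\mathrm{trace}(G^{-1}H)$, whereas you derive the same bounds by expanding $\mb v^tH\mb v=\enorm{\sum_j v_j(I-S)\hat\phi_j}^2$ and applying the triangle inequality followed by (weighted) Cauchy--Schwarz; these are equivalent, and both finish with~\eqref{EigenvectorErrors1}. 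Your proof of~\eqref{EigenvalueTrace} is identical to the paper's: decompose $\hat\mu_j=\enorm{S\hat\phi_j}^2+\enorm{(I-S)\hat\phi_j}^2$, sum, and use Bessel's inequality $\sum_j(\hat\phi_j,\psi_k)^2\le 1$ to get $\sum_j\enorm{S\hat\phi_j}^2\le\sum_k\mu_k$. Your explicit flagging of the $L^2$-orthonormality of the $\hat\phi_j$ needed for Bessel is a good observation---the paper's proof uses it too, tacitly, and (as you note) it is an extra assumption beyond $B(\hat\phi_i,\hat\phi_j)=\hat\mu_i\delta_{ij}$ that holds automatically for Rayleigh--Ritz pairs.
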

\begin{proof}
  The ratio in~\eqref{RelativeErrorIdentity} is clearly controlled by
  the eigenvalues of $G^{-1}H$, and a simple upper-bound is given by
  $\mathrm{trace}(H)/\lambda_{\min}(G)$.  Combining this
  with~\eqref{EigenvectorErrors1} and~\eqref{ErrorConstant1} yields
  the bound~\eqref{TraceEstimate}.  Under the further assumptions on
  $\hat\phi_j$ and $\hat\mu_j$, $G$ is diagonal, and we instead
  bound~\eqref{RelativeErrorIdentity} by $\mathrm{trace}(G^{-1}H)$ to
  obtain~\eqref{TraceEstimate2}.  For the eigenvalue estimate, 
   let $\{\phi_1,\ldots,\phi_r\}$ be an orthonormal eigenbasis of $E$, with
  $\mu_j=\enorm{\phi_j}^2$. We have
\begin{align*}
\sum_{i=1}^r\enorm{(I-S)\hat\phi_i}^2&=\sum_{i=1}^r\left(
\enorm{\hat\phi_i}^2-\sum_{j=1}^r\mu_j[(\phi_j,\hat\phi_i)]^2\right)\\
&=
\sum_{i=1}^r\hat\mu_i-\sum_{j=1}^r\mu_j \sum_{i=1}^r[(\phi_j,\hat\phi_i)]^2
\geq \sum_{i=1}^r\hat\mu_i-\sum_{j=1}^r\mu_j ~.
\end{align*}
The bounds $\enorm{(I-S)\hat\phi_i}\leq C(\hat\mu_j,\Lambda)
\|\hat\phi_j-\cA^{-1}(\hat\mu_j\hat\phi_j)\|$ complete the proof.
\end{proof}

\begin{remark}\label{GapRemark}
The subspace gap (cf.~\cite[Chapter 4, Section 2]{KatoBook}) is a
standard measure of  distance between subspaces.
The ``Pair of Projectors Alternative'' \cite[Chapter 1, Theorem
6.34]{KatoBook}, implies that, if
        $\sup_{\hat v\in\hat{E}}\enorm{(I-S)\hat v}/\enorm{\hat
            v}<1$, then 
\begin{align}\label{SubspaceGap}
\mathrm{gap}(E,\hat{E})=
\sup_{\hat v\in\hat{E}}\inf_{v\in E}\frac{\enorm{v-\hat  v}}{\enorm{\hat v}}
=\sup_{v\in E}\inf_{\hat v\in\hat{E}}\frac{\enorm{v-\hat v}}{\enorm{v}}<1~.
\end{align}
More generally, the gap between two subspaces $M,N$ of $\cH$, with respect
to the energy norm is
\begin{align*}
\mathrm{gap}(M,N)=\max\left\{
\sup_{w \in M}\inf_{v\in N}\frac{\enorm{v-w}}{\enorm{w}}\,,\,\sup_{v\in N}\inf_{w\in M}\frac{\enorm{v-w}}{\enorm{v}}\right\}~.
\end{align*}
If $P_M$ and $P_N$ are the corresponding orthogonal projectors (with
respect to the energy inner-product), then
$\mathrm{gap}(M,N)=\enorm{P_M-P_N}$, so we see that the gap provides a
metric between subspaces.  In fact, when $M$ and $N$ have the same finite
dimension, $\mathrm{gap}(M,N)$ is the sine of the largest principle
angle between the these subspaces (cf.~\cite{Knyazev2010}).  
A natural alternative to the subspace gap is to measure the distance
between the corresponding orthogonal projectors using a
Hilbert-Schmidt norm.  This is the approach taken
in~\cite{Cances2020}, for example.
\end{remark}

\begin{remark}\label{ConstantRemark}  Suppose that
  $\Lambda,\hat\Lambda\subset (a,b)$ for some $0<a<b$, and
  $\spec(A)\setminus\Lambda\subset(0,a]\cup[b,\infty)$.  Then we have
\begin{align}\label{ConstantEstimate}
C(\hat\Lambda,\Lambda)=\max\{C(\hat\mu_1,\Lambda),
  C(\hat\mu_r,\Lambda)\}\leq \max\left\{\frac{a}{\hat\mu_1-a}, \frac{b}{b-\hat\mu_r}\right\}~.
\end{align}
\end{remark}

For $f\in L^2(\Omega)$, we define $u(f)\in \cH$ and $\hat{u}(f)\in V$ by
\begin{align}\label{SourceProblems}
B(u(f),v)=(f,v)_0\mbox{ for all }v\in\cH\quad,\quad
             B(\hat{u}(f),v)=(f,v)_0\mbox{ for all }v\in V~.
\end{align}
Now suppose that $(\hat\mu_j,\hat\phi_j)$ is a solution
of~\eqref{DiscVarEig} for $1\leq j\leq r$.
Taking $f_j=\hat\mu_j\hat\phi_j$, we have
$u(f_j)=\cA^{-1}(\hat\mu_j\hat\phi_j)$ and $\hat{u}(f_j)=\hat\phi_j$, and we
rephrase~\eqref{TraceEstimate2} and~\eqref{EigenvalueTrace} as
\begin{align}\label{TraceEstimates3a}
\sup_{\hat v\in\hat{E}} \frac{\enorm{(I-S)\hat v}^2}{\enorm{\hat v}^2}&\leq [C(\hat\Lambda,\Lambda)]^2
          \,\sum_{j=1}^r\frac{\enorm{u(f_j)-\hat{u}(f_j)}^2}{\hat\mu_j}~,\\
\sum_{j=1}^r(\hat\mu_j-\mu_j)&\leq [C(\hat\Lambda,\Lambda)]^2
          \,\sum_{j=1}^r \enorm{u(f_j)-\hat{u}(f_j)}^2~. \label{TraceEstimates3b}
\end{align}
These forms of the estimates emphasize that eigenvalue and eigenspace
errors are controlled by discretization errors of source problems
whose data are drawn from the discrete eigenpairs, and
we will return to them in our development of practical \textit{a
  posteriori} estimates for eigenvalue and eigenspace
errors in Section~\ref{HPDiscretization}.  We also note that, in this
setting, $\hat\mu_j\geq \mu_j$.

The upper-bound on the subspace gap provided
in~\eqref{TraceEstimates3a}, as well as its computable counterpart
in~\eqref{TraceEstimates4a} are theoretically convenient
over-estimates, which may be pessimistic when $r$ is large.
In fact, they might seem more natural as bounds on a Hilbert-Schmidt
type measure of subspace error (see Remark~\ref{GapRemark}).  The
recent contribution~\cite{Cances2020} takes this approach.
If we had a decent computable approximation $\tilde{H}$ of the Gram
matrix $H$ from~\eqref{RelativeErrorIdentity}, we could compute the
eigenvalues of generalized eigenvalue problem
$\tilde{H}\mb{x}=\tilde\kappa G\mb{x}$ directly, and not be restricted
to trace-type estimates such
as~\eqref{TraceEstimate},~\eqref{TraceEstimate2}
or~\eqref{TraceEstimates3a}.  The largest of these eigenvalues would
then provide an estimate of the subspace gap~\eqref{SubspaceGap}.
In Section~\ref{sec:Bridge Configurations}, we illustrate how the
error estimates described in Section~\ref{HPDiscretization} enable
the computation of such an approximation $\tilde{H}$ of $H$.

The following
Bauer-Fike estimate (cf.~\cite[Theorem 7.2.2]{GolubVanLoan}) provides
a measure of distance between the eigenvalues of $(H,G)$ and
$(\tilde{H},G)$ in terms of matrix norms.
\begin{proposition}\label{BauerFike}
Let $\tilde{H}$ be a positive semi-definite approximation of the Gram
matrix $H$ from~\eqref{RelativeErrorIdentity}.  If $\tilde\kappa$ is an
eigenvalue for the pair $(\tilde{H},G)$, then 
\begin{align*}
\min_{\kappa\in\spec(H,G)}|\kappa-\tilde\kappa|\leq \|G^{-1/2}\|_p^2\|H-\tilde{H}\|_p~,
\end{align*}
for any matrix $p$-norm, where $G^{-1/2}$ is the (unique) positive definite
square root of $G^{-1}$.  
\end{proposition}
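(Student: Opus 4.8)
The plan is to convert both generalized eigenvalue problems into ordinary eigenvalue problems for real symmetric matrices by a congruence transformation, and then to run a Bauer--Fike--type argument in which the symmetry is exploited twice. Since $G$ from~\eqref{GramMatrices} is the Gram matrix of a basis with respect to the positive definite energy inner product, it is symmetric positive definite, so $G^{1/2}$, $G^{-1/2}$, and the factorization $G=G^{1/2}G^{1/2}$ are all available. Put $M=G^{-1/2}HG^{-1/2}$ and $\tilde M=G^{-1/2}\tilde HG^{-1/2}$. Substituting $\mb{y}=G^{1/2}\mb{x}$ shows that $H\mb{x}=\kappa G\mb{x}$ is equivalent to $M\mb{y}=\kappa\mb{y}$, and similarly for $(\tilde H,G)$ (equivalently, $\tilde H-\kappa G=G^{1/2}(\tilde M-\kappa I)G^{1/2}$ and $G^{1/2}$ is nonsingular); hence $\spec(H,G)=\spec(M)$ and $\spec(\tilde H,G)=\spec(\tilde M)$. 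Because $H$ and $\tilde H$ are symmetric ($H$ is a Gram matrix and $\tilde H$ is symmetric by hypothesis) and $G^{-1/2}$ is symmetric, both $M$ and $\tilde M$ are real symmetric; in particular $E:=\tilde M-M$ is real symmetric.

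Next I would fix an eigenvalue $\tilde\kappa\in\spec(\tilde H,G)=\spec(\tilde M)$ with eigenvector $\mb{y}$ normalized so that $\|\mb{y}\|_2=1$, so that $(M-\tilde\kappa I)\mb{y}=-E\mb{y}$. If $\tilde\kappa\in\spec(H,G)$ there is nothing to prove, so assume $\tilde\kappa\not\in\spec(M)$; then $M-\tilde\kappa I$ is a nonsingular symmetric matrix whose smallest singular value is $\min_{\kappa\in\spec(M)}|\kappa-\tilde\kappa|$, so $\|(M-\tilde\kappa I)\mb{y}\|_2\geq\min_{\kappa\in\spec(M)}|\kappa-\tilde\kappa|$, and therefore $\min_{\kappa\in\spec(M)}|\kappa-\tilde\kappa|\leq\|E\mb{y}\|_2\leq\|E\|_2$ --- this is the Bauer--Fike estimate of~\cite[Theorem 7.2.2]{GolubVanLoan} in the case where the diagonalizing matrix is orthogonal and hence perfectly conditioned. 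The key step is then to pass from the spectral norm to a general matrix $p$-norm: because $E$ is symmetric, $\|E\|_2$ equals the spectral radius of $E$, and the spectral radius is dominated by every induced matrix $p$-norm, so $\|E\|_2\leq\|E\|_p$. Finally, by submultiplicativity of the induced $p$-norm,
\begin{align*}
\min_{\kappa\in\spec(M)}|\kappa-\tilde\kappa|\leq\|E\|_2\leq\|E\|_p=\|G^{-1/2}(\tilde H-H)G^{-1/2}\|_p\leq\|G^{-1/2}\|_p^2\,\|\tilde H-H\|_p~,
\end{align*}
and since $\spec(M)=\spec(H,G)$ this is exactly the asserted inequality.

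The only delicate point --- and what I would regard as the main obstacle to a clean, $p$-uniform bound --- is the step $\|E\|_2\leq\|E\|_p$, which rests on $E=\tilde M-M$ being symmetric. This uses both that $G^{-1/2}$ is symmetric, so that the congruence preserves symmetry, and that $\tilde H$ is symmetric; without the symmetry of $E$ one would apply Bauer--Fike for a general $p$-norm, which introduces the (generally nontrivial) condition number of an eigenvector matrix, and one would then recover the stated constant only for $p=2$. I note in passing that the positive semidefiniteness of $\tilde H$ is not actually needed for the inequality itself; it simply guarantees $\tilde\kappa\geq0$ and keeps the estimate within the regime in which it is applied to~\eqref{RelativeErrorIdentity}. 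For $p=2$ one has $\|G^{-1/2}\|_2^2=1/\lambda_{\min}(G)$, which recovers the factor appearing in Theorem~\ref{TraceTheorem}.
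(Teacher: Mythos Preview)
Your argument is correct. The paper does not actually supply a proof of this proposition; it merely points to the Bauer--Fike theorem~\cite[Theorem 7.2.2]{GolubVanLoan} with a ``cf.'', so you have filled in precisely the adaptation that is needed to pass from the standard eigenvalue version to the generalized pair $(H,G)$ with the clean constant $\|G^{-1/2}\|_p^2$.

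Your reduction via the congruence $M=G^{-1/2}HG^{-1/2}$ is the natural one and is implicit in the paper's citation. The one genuinely additional ingredient you supply---and correctly flag as the crux---is the passage $\|E\|_2=\rho(E)\leq\|E\|_p$, which relies on the symmetry of $E=G^{-1/2}(\tilde H-H)G^{-1/2}$. A direct invocation of the textbook Bauer--Fike bound in the $p$-norm would carry the eigenvector condition number $\kappa_p(X)$ of the (orthogonal) diagonalizer $X$ of $M$, which is $1$ only for $p=2$; your route through the $2$-norm first, followed by the spectral-radius inequality, is what removes that factor and yields the stated bound uniformly in $p$. Your side remarks---that positive semidefiniteness of $\tilde H$ is not used in the inequality itself, and that $\|G^{-1/2}\|_2^2=1/\lambda_{\min}(G)$---are also accurate and consistent with the paper's subsequent comment that $\|G^{-1/2}\|_p^2=(\min_j\hat\mu_j)^{-1}$ when $G$ is diagonal.
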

We note that, if
$G=\mathrm{diag}(\hat\mu_1,\cdots,\hat\mu_r)$, then
$\|G^{-1/2}\|_p^2=(\min\{\hat\mu_j\})^{-1}$.
It is clear that the roles of $H$ and $\tilde{H}$ can be reversed
in Proposition~\ref{BauerFike}, so we actually have a bound on the
Hausdorff distance between $K=\spec(H,G)$ and
$\tilde{K}=\spec (\tilde{H},G)$, 
\begin{align}\label{ApproxDef_Hausdorff}
\mathrm{dist}(K,\tilde{K})\doteq
\max\left\{\max_{\tilde\kappa\in\tilde{K}}\min_{\kappa\in K}|\kappa-\tilde{\kappa}|\,,\,
 \max_{\kappa\in K}\min_{\tilde\kappa\in\tilde{K}}|\kappa-\tilde{\kappa}|
  \right\}\leq
\|G^{-1/2}\|_p^2\|H-\tilde{H}\|_p~.
\end{align}
In Section~\ref{HPDiscretization}, after we have properly introduced the
approximate error functions $\varepsilon_j$, we
illustrate~\eqref{ApproxDef_Hausdorff} for the Slit Disk problem and
the choice $\tilde{H}_{ij}=B(\varepsilon_j, \varepsilon_i)$, in Example~\ref{SlitDiskRevisited}.

\begin{remark}\label{ApproximationDefects}
  In~\cite{Grubisic2006b,Grubisic2009,Bank2013}, the authors develop
  an a posteriori eigenvalue and eigenvector error analysis based on
  \textit{approximation defects}, which are essentially the square
  roots of the eigenvalues of the pair $(H,G)$.
\end{remark}

\begin{remark}
	A natural question would be the choice of optimal $p$. Matrix $p$-norms in general do not have a monotonic relationship and the choice of $2$ norms presents an easily computable norm. More on evaluating other matrix $p$ norms can be found in \cite{Higham1992}.
\end{remark}

\section{p and hp Finite Element Discretization, A Posteriori Estimates}
\label{HPDiscretization}

Let $\Omega\subset\RR^2$ be a open, bounded domain, with Lipschitz
boundary $\partial\Omega$, and let $\cT=\{T\}$ be a conforming
partition of $\Omega$ into convex (curvilinear) triangles and
quadrilaterals, which we call a \textit{mesh} or
\textit{triangulation}, see Figure~\ref{fig:SlitDisk}.  We do not
impose any restriction on the number of curved edges.  Any curved
elements are handled using standard blending function techniques
(cf.~\cite{szabo}).  In order to reduce the level of technicality in
describing the families of finite element spaces that we will
consider, we state them for only in the case of polygonal domains,
partitioned into triangles and quadrilaterals.  

For a given element $T$ and non-negative integer $m$, we define the
local polynomial space $\QQ_m(T)$ as follows.  If $T$ is a triangle,
then $\QQ_m(T)$ consists of the polynomials of total degree $\leq m$,
so $\dim \QQ_m(T)=(m+2)(m+1)/2$.  If $T$ is a quadrilateral, then
$\QQ_m(T)$ consists of polynomials of degree $\leq m$ in each
variable, so $\dim \QQ_m(T)=(m+1)^2$.
For a given triangulation, $\cT$, let $\mathbf{p}:\cT\to\NN$ be a function that
assigns a positive integer to each element $T\in\cT$. This map
is called a $p$-vector.
We define the corresponding finite element space
\begin{align}
V=V(\cT,\mathbf{p})=\{v\in \cH:\,v_{\vert_{T}}\in\QQ_{\mathbf{p}(T)}(T)\mbox{ for all }T\in\cT\}~.
\end{align}
We note that $V\subset C(\overline\Omega)$.  

Let $\cF=\{\cT_\ell\}$ be a family of nested meshes obtained from
successive refinements of an initial coarse mesh, where the index
$\ell\geq 0$ refers to a refinement level.  Of particular interest to
us in this work are eigenproblems, such as that in
Example~\ref{SlitDiskExample}, for which it is known that certain
eigenfunctions will be singular (e.g. have unbounded derivatives) at
particular points in the domain.  Such points are commonly referred to
as \textit{singular points}, and in the case of the Laplace operator,
occur at points on the boundary where there are non-convex corners,
and where there is a shift in the type of boundary condition
(e.g. from a Dirichlet condition to a Neumann condition).  We
explore examples having this second type of singular points
extensively in the Section~\ref{sec:Exper}.  The asymptotic behavior
of such singularities in the vicinity of singular points is
well-understood (cf.~\cite{Grisvard1985,Grisvard1992,Kozlov1997}), and
based on such a priori knowledge various refinement approaches have
been proposed that involve a geometric grading of element sizes toward
singular points that takes into account this a priori knowledge of the
singularity strength~\cite[Section 4.5]{Schwab1998}.  Beginning with a coarse mesh
$\cT_0$ in which the vertex graph distance between singular points
(i.e., the minimal number of edges in a path connecting these points)
is at least two, the mesh grading approach is implemented using
element-level replacement rules employing exact geometry description
as described in~\cite{Hakula2013}.

Given such a family of meshes, we distinguish two families of finite
element spaces defined on them.  We refer to the first as the
$p$-method family because it uses a fixed polynomial degree for every
element in the mesh.  For this family, the polynomial degree $p$ is
chosen and applied to each element in the $p$th mesh in the family,
$\cT_p\in\cF$, i.e. $\mb{p}(T)=p$ for all $T\in\cT_p$.  We denote the
finite element spaces in this family by $V_{1,p}$, and use $4\leq p\leq 12$ for our experiments. 
We note that the spaces are nested, $V_{1,p}\subset V_{1,p+1}$.
We refer to the second family as the $hp$-family because it uses
variable polynomial degrees in the mesh.  For the second family, given a
polynomial degree $p$, the mesh $\cT_p$ is chosen as in the first
family, but polynomial degrees are no longer assigned uniformly throughout
the mesh.  All elements touching a singular point are assigned
polynomial degree $1$, the next layer of elements are assigned
polynomial degree $2$, and so on, until polynomials of degree $p$ are
achieved at the $p$th layer.  Any elements that are greater than $p$
layers away from all singular points are also assigned polynomial
degree $p$.   The initial mesh and refinement scheme ensures that
there is no ambiguity in how polynomial degrees are assigned to each
element. The element layers are created by nested application of 
the same replacement rule
on every element touching a singular point. At each step, only the 
elements touching the singular point created at the previous one are
refined making the bookkeeping of the layers simple. 
This is illustrated in Figure~\ref{fig:SlitDisk}.
We denote the finite element spaces in this  by $V_{2,p}$,
again using $4\leq p\leq 12$ for our experiments. 
As before, the spaces are nested, $V_{2,p}\subset V_{2,p+1}$, and we
also note that $V_{2,p}\subset V_{1,p}$.  

In practice, three types of polynomial functions are distinguished on
an element: \textit{vertex functions}, which vanish on all vertices
except one; \textit{edge functions}, which vanish on all edges except
one; and \textit{element functions} (interior bubble functions), which
vanish on all edges.  On the global (mesh) level, vertex functions are
supported in the patch of elements sharing that vertex, edge functions
are supported in the (one or two) elements sharing an edge, and
element functions are supported in a single element.  There are
well-established techniques for constructing \textit{hierarchical
  bases} for $\QQ_p(T)$ (cf.~\cite{Schwab1998}), starting from a basis
of $\QQ_1(T)$ (vertex functions), augmenting it with edge functions
from $\QQ_2(T)$ to form a basis for $\QQ_2(T)$, further augmenting
this with edge and element functions from $\QQ_3(T)$ to form a basis
for $\QQ_3(T)$, and so on.  This distinction between the types of
polynomial functions enables one to build elements in which the
degrees of the element functions may differ from those of the edge
functions, and the degree used on one edge may differ from that used
on another.  In fact, this is precisely what is done in the
$hp$-family, $V_{2,p}$, to allow for variable $p(T)$.  In particular,
when $T$ and $T'$ are adjacent elements whose assigned polynomial
degrees differ by one, say $p(T)=m$ and $p(T')=m+1$, the polynomial
degree of the edge functions associated with their shared edge is
taken to be $m+1$.  The use of hierarchical bases, and the distinction
between edge and element functions plays a prominent role in the type
of a posteriori error estimates that we now discuss.

\subsection{A Posteriori Estimates}\label{sec:AuxiliarySubspace} 
As suggested in Section~\ref{Theory}, most clearly in the eigenvalue
and eigenvector error
estimates~\eqref{TraceEstimates3a}-\eqref{TraceEstimates3b}, we see
how such error estimates are built upon those for source problems.
More specifically we see that an a posteriori estimate of
$\enorm{u(f_j)-\hat{u}(f_j)}$, where the ``source'' $f_j=\hat\mu_j\hat\phi_j$ is
obtained from the approximate eigenpair $(\hat\mu_j,\hat\phi_j)$,
provides a measure of how far $\hat\phi_j$ is from a true eigenvector
and how far $\hat\mu_j$ is from a true eigenvalue.  The field of a
posteriori error estimation for source problems, particularly for
the reaction-diffusion operators we consider here, is quite mature, so
we have many well-documented methods for estimating 
$\enorm{u(f_j)-\hat{u}(f_j)}$.  The one we have chosen for the present
work is based on the principle of hierarchical bases, that seems
particularly well-suited to the $p$- and $hp$-settings.  What we use
is described in detail, and rigorously tested, in~\cite{HNO}, so we
provide a general overview here, and focus on how it is used in the
eigenvalue/vector context.

Given $f\in L^2(\Omega)$, the exact and finite element solutions,
$u(f)\in\cH$ and $\hat{u}(f)\in V$, satisfy
\begin{align*}
B(u(f),v)=(f,v)\mbox{ for all }v\in\cH\quad,\quad
  B(\hat{u}(f),v)=(f,v) \mbox{ for all }v\in V~.
\end{align*}
We compute an approximate error function $\varepsilon(f)\in W$ in an
\textit{auxiliary subspace} $W\subset\cH$ as the projection of
$u(f)-\hat{u}(f)$ onto $W$,
\begin{align}\label{ErrorEquation}
B(\varepsilon(f),v)= B(u(f)-\hat{u}(f),v)=(f,v)-B(\hat{u}(f),v)\mbox{
  for all }v\in W~.
\end{align}
The \textit{error space} $W$ is chosen so that $V\cap W=\{0\}$, $W$ is
defined on the same mesh as $V$, and $V\oplus W$ is a richer
approximation space on this mesh.  We adopt the approach suggested
in~\cite{HNO} for problems in 2D, which we describe at the level of
elements.  If element functions of degree $m$ are used on an element
$T$ in $V$, then element functions of degree $m+2$ are used on this
same element in $W$.  If edge functions of degree $m$ are used on an
edge $e$ in $V$, then edge functions of degree $m+1$ are used on this
same edge in $W$.  We slightly rephrase~\cite[Theorem 1.4]{HNO} in our
context for the energy norm.  We take $\cE$ to be the set of edges of
the mesh that are not on the Dirichlet part of the boundary, and
define the \textit{volumetric residual},
$R_T=f-(-\nabla\cdot A\nabla \hat{u}(f)+b\hat{u}(f)) _{\vert_T}$.
When $e\in\cE$ is an interior edge, we define the \textit{edge residual} as
$r_e=(A\nabla\hat{u}(f)\cdot\mb{n}_T)_{\vert_T}+(A\nabla\hat{u}(f)\cdot\mb{n}_{T'})_{\vert_T'}$,
where $T$ and $T'$ are the cells sharing this egde, and $\mb{n}_T$ and
$\mb{n}_{T'}$ are their outward unit normals.  For a Neumann boundary
edge, we define the edge residual as
$r_e=(A\nabla\hat{u}(f)\cdot\mb{n}_T)_{\vert_T}$.  With these
definitions in hand, we can state the theorem.
\begin{theorem}\label{Thm1_4HNO}
There is a constant $c$, depending on the shape-regularity of $\cT$
and the polynomial degree $p$ such that
\begin{align*}
\enorm{\varepsilon(f)}\leq \enorm{u(f)-\hat{u}(f)}\leq c\left(\enorm{\varepsilon(f)}+\mathrm{osc}(R,r,\cT)\right)~,
\end{align*}
where the residual oscillation is defined by
\begin{align*}
[\mathrm{osc}(R,r,\cT)]^2=\sum_{T\in\cT}h_T^2\inf_{\kappa\in\QQ_{p-1}(T)}\|R_T-\kappa\|_{L^2(T)}^2+\sum_{e\in\cE}|e|\inf_{\kappa\in\QQ_{p-1}(e)}\|r_e-\kappa\|_{L^2(e)}^2~,
\end{align*}
where $h_T$ and $|e|$ are the diameter of $T$ and length of the edge $e$,
respectively.
\end{theorem}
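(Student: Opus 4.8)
The statement we must prove is Theorem~\ref{Thm1_4HNO}, which is essentially a restatement of \cite[Theorem 1.4]{HNO} adapted to the energy norm, so the plan is to reduce it to the known result rather than to reprove it from scratch. The lower bound $\enorm{\varepsilon(f)}\leq\enorm{u(f)-\hat{u}(f)}$ is the easy half: by definition~\eqref{ErrorEquation}, $\varepsilon(f)$ is the $B$-orthogonal projection of $u(f)-\hat{u}(f)$ onto the subspace $W\subset\cH$, and an orthogonal projection in an inner-product space never increases the norm. First I would write $\enorm{u(f)-\hat u(f)}^2=\enorm{\varepsilon(f)}^2+\enorm{(u(f)-\hat u(f))-\varepsilon(f)}^2$ using Galerkin orthogonality of $\varepsilon(f)$ against $W$, which immediately gives the inequality. (One small point to check: $\varepsilon(f)$ is well defined because $B$ is coercive on $\cH$, hence on $W$, so~\eqref{ErrorEquation} has a unique solution.)

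For the upper bound, the plan is to invoke \cite[Theorem 1.4]{HNO} directly. That reference establishes an estimate of the form $\enorm{u(f)-\hat u(f)}\leq c(\enorm{\varepsilon(f)}+\mathrm{osc}(R,r,\cT))$ under exactly the hypotheses in force here: a shape-regular mesh $\cT$ of triangles/quadrilaterals, the hierarchical $p$/$hp$ finite element space $V$, and the auxiliary space $W$ built by raising element-function degrees by two and edge-function degrees by one. The residuals $R_T$ and $r_e$ and the oscillation functional $\mathrm{osc}(R,r,\cT)$ are defined precisely as in \cite{HNO} (modulo the coefficient $A$ appearing in the flux, which is accommodated there). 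The constant $c$ depends only on shape regularity and on $p$, again as asserted in \cite{HNO}. The one genuine item of work is to confirm that the energy-norm version follows from whatever norm \cite{HNO} states the result in: since $B$ is a bona fide inner product on $\cH$ with $\enorm{\cdot}=\sqrt{B(\cdot,\cdot)}$ equivalent to $\|\cdot\|_1$, and since the HNO argument is itself an energy-norm (or $H^1$-seminorm) argument driven by the bilinear form, the translation is immediate; the equivalence constants between $\enorm{\cdot}$ and $\|\cdot\|_1$ can be absorbed into $c$ if needed.

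The main obstacle, such as it is, is bookkeeping rather than mathematics: one must verify that the specific choice of $W$ described in Section~\ref{sec:AuxiliarySubspace} (element degree $m\mapsto m+2$, edge degree $m\mapsto m+1$) matches the saturation/stability hypotheses of \cite[Theorem 1.4]{HNO}, in particular that $W$ contains enough bubble content for the local efficiency estimates underpinning the oscillation term, and that $V\cap W=\{0\}$ with $V\oplus W$ conforming on the given mesh. These are precisely the properties arranged by the construction in Section~\ref{sec:AuxiliarySubspace} and verified in \cite{HNO}, so no new argument is required. I would therefore present the proof as: (i) the projection inequality for the lower bound, in two lines; (ii) a sentence noting that the hypotheses of \cite[Theorem 1.4]{HNO} are met by the present $\cT$, $V$, $W$, so the upper bound holds with a constant $c=c(\cT,p)$; (iii) a remark that the passage to the energy norm is harmless by norm equivalence and the fact that $\varepsilon(f)$, $R_T$, $r_e$, and $\mathrm{osc}(R,r,\cT)$ are defined through the bilinear form $B$ itself.
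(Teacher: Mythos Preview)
Your proposal is correct and matches the paper's approach: the paper does not prove this theorem at all but simply presents it as a restatement of \cite[Theorem~1.4]{HNO} in the energy norm, noting only that the original proof was given for simplicial meshes. Your plan to cite \cite{HNO} for the upper bound and to observe that the lower bound is the trivial projection inequality is exactly right, and in fact supplies a bit more detail than the paper itself does.
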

The proof given in~\cite{HNO} was given for simplicial meshes, but its
performance was rigorously tested for more general meshes containing
both (curvilinear) triangles and quadrilaterals in 2D, and
  hexahedral meshes in 3D.  Although~\cite{HNO}
provides compelling numerical evidence that $c$ is independent of $p$,
such independence has not been theoretically established.

\begin{remark}\label{SpecialOscillation}
If $A$ is piecewise constant on $\Omega$, and constant on each
$T\in\cT$, then the residual oscillation term in Theorem~\ref{Thm1_4HNO}
simplifies to
\begin{align*}
[\mathrm{osc}(R,r,\cT)]^2=\sum_{T\in\cT}h_T^2\inf_{\kappa\in\QQ_{p-1}(T)}\|b
  \hat{u}(f)-\kappa\|_{L^2(T)}^2~.
\end{align*}
In our examples, $b=0$ as well.  In this case, there is no residual oscillation
at all, and the error estimate of Theorem~\ref{Thm1_4HNO} becomes
\begin{align}\label{ErrorEquivalence}
\enorm{\varepsilon(f)}\leq \enorm{u(f)-\hat{u}(f)}\leq c\enorm{\varepsilon(f)}~.
\end{align}
\end{remark}

In the eigenvalue context, suppose we have computed approximate
eigenpairs $\{(\hat\mu_i,\hat\phi_i):\,1\leq i\leq r\}$ in $V$, with
$B(\hat\phi_i,v)=\hat\mu_i(\hat\phi_i,v)$ for all $v\in V$ and 
$(\hat\phi_i,\hat\phi_j)=\delta_{ij}$.  Our approximation $\tilde{H}$ of the
matrix $H$ in~\eqref{GramMatrices} is given by 
\begin{align}\label{HApprox}
\tilde{H}_{ij}=B(\varepsilon_j,\varepsilon_i)\quad,\quad
  \varepsilon_k=\varepsilon(f_k)
\quad,\quad f_k=\hat\mu_k\hat\phi_k~.
\end{align}
The matrix $G$ is diagonal, $G=\mathrm{diag}(\hat\mu_1,\ldots,\hat\mu_r)$.
Assuming piecewise constant $A$ and $b=0$, as in
Remark~\ref{SpecialOscillation}, in order to state eigenvector and
eigenvalue error estimates without residual oscillation terms, we have
\begin{align}\label{TraceEstimates4a}
\sup_{\hat v\in\hat{E}} \frac{\enorm{(I-S)\hat v}^2}{\enorm{\hat v}^2}&\leq c^2[C(\hat\Lambda,\Lambda)]^2
          \,\sum_{j=1}^r\frac{\enorm{\varepsilon_j}^2}{\hat\mu_j}~,\\
\sum_{j=1}^r(\hat\mu_j-\mu_j)&\leq c^2[C(\hat\Lambda,\Lambda)]^2
          \,\sum_{j=1}^r \enorm{\varepsilon_j}^2~. \label{TraceEstimates4b}
\end{align}
For convenience, the corresponding estimates for the approximation errors
associated with a single ($r=1$), simple, eigenvalue $\mu_1=\lambda_k$
with eigenvector $\phi_1=\psi_k$
and the computed eigenpair
$(\hat\mu_1,\hat\phi_1)=(\hat\lambda_k,\hat\psi_k)$ are
\begin{align}\label{TraceEstimates5}
\inf_{v\in \mathrm{span}\{\psi_k\}}\enorm{\hat\psi_k-v}\leq C_k
          \,\enorm{\varepsilon(\hat\lambda_k\hat\psi_k)}\quad,\quad
\hat\lambda_k-\lambda_k\leq C_k^2\,\enorm{\varepsilon(\hat\lambda_k\hat\psi_k)}^2~,
\end{align}
where $C_k=c \,C(\hat\lambda_k,\lambda_k)$.

\begin{example}\label{SlitDiskRevisited}
  We revisit the Slit Disk example, Example~\ref{SlitDiskExample} from
  Section~\ref{Intro}, computing the approximate eigenpairs
  $(\hat\lambda_k,\hat\psi_k)$, $1\leq k\leq 60$, on a sequence of
  $p$-version and $hp$-version finite element spaces associated with a
  family of meshes that are strongly graded toward the origin (see
  Figure~\ref{fig:SlitDisk}).  These meshes/spaces, which provide good
  approximation of features (e.g. singularities) of eigenmodes near
  the origin, but for smaller $p$ do not capture oscillatory behavior
  farther away from the origin nearly as well, were deliberately
  designed to demonstrate phenomena such as the mixing of modes
  observed in Figure~\ref{fig:SlitDiskConvergence52_53}.  
\begin{figure}
	\centering
	\subfloat[Mesh at 100\%.]
	{\label{fig:SlitDiskMesh1}\includegraphics[width=0.45\textwidth]{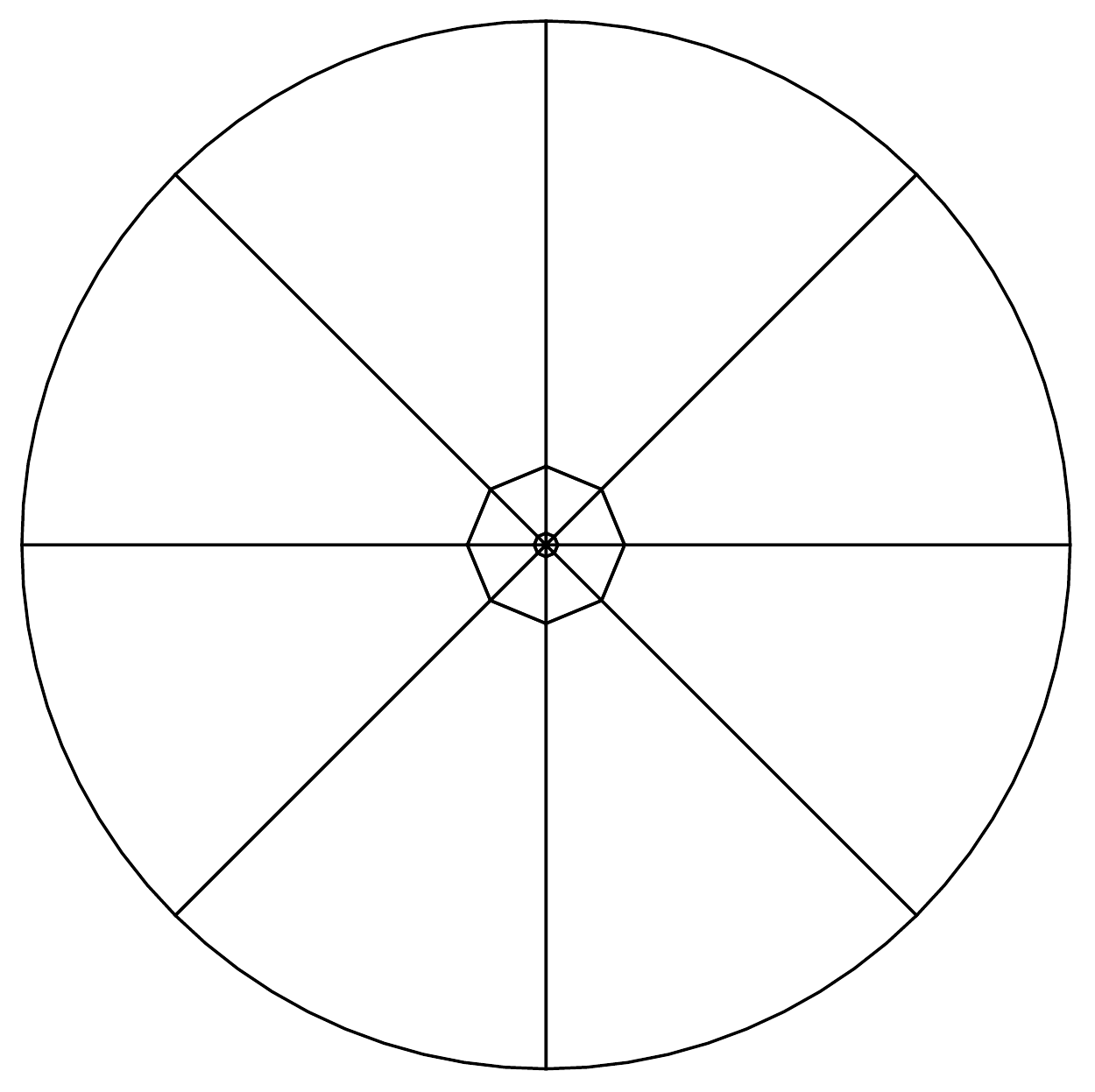}}
\quad
	\subfloat[Central portion of mesh at 1200\%.]
	{\label{fig:SlitDiskMesh12}\includegraphics[width=0.45\textwidth]{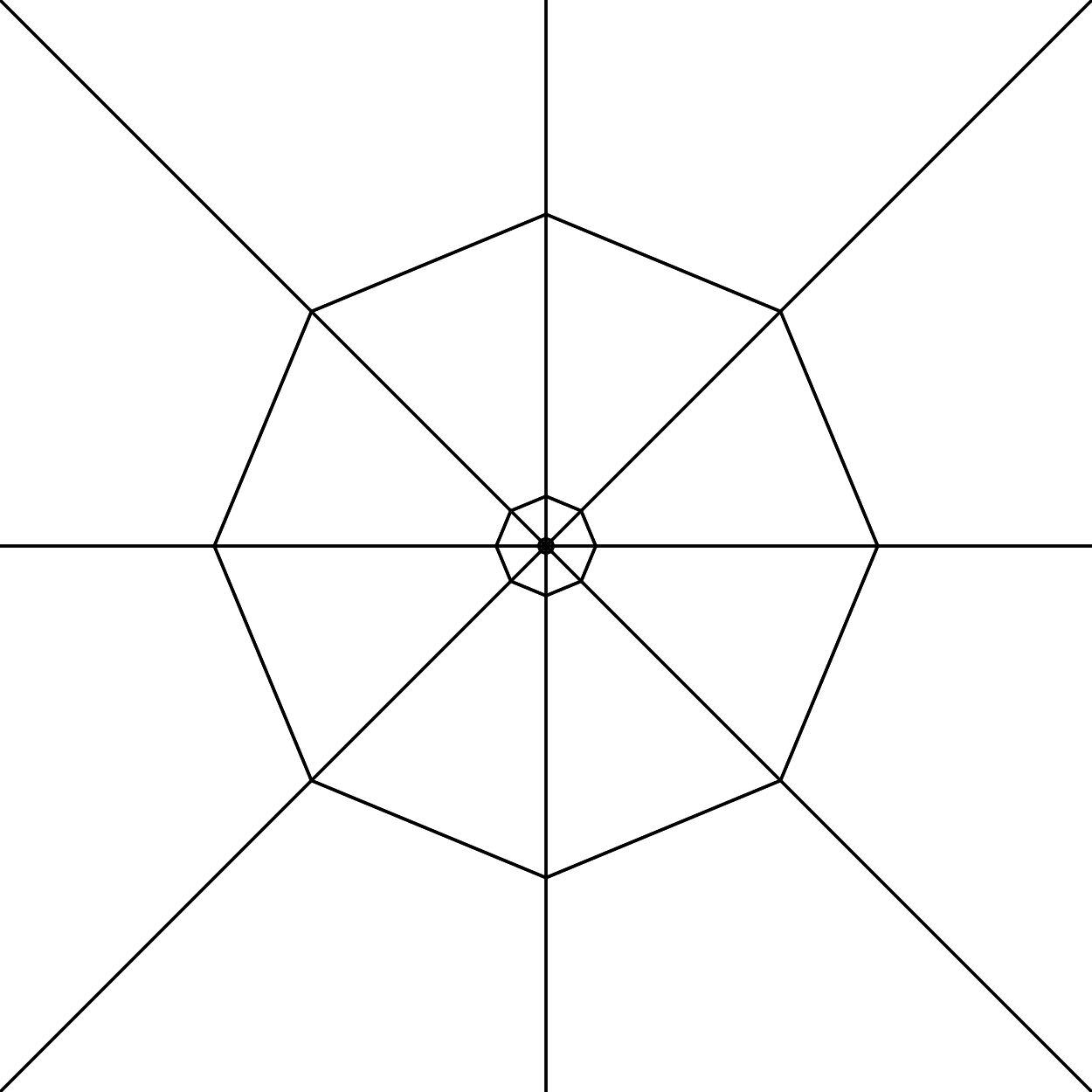}}
	\caption{A strongly graded mesh on the Slit Disk, and a close-up.}\label{fig:SlitDisk}
\end{figure}

We first consider the potential effects of this phenomena on the
quality of our computable estimates of eigenvalue and eigenvector
error. Our measures of ``quality'' are the \textit{effectivity
  ratios} (effectivities),
\begin{align}
\frac{\enorm{\varepsilon(\hat\lambda_k\hat\psi_k)}^2}{\hat\lambda_k-\lambda_k}
\quad,\quad
\frac{\|\varepsilon(\hat\lambda_k\hat\psi_k)\|_{0}}{\|\psi_k-\hat\psi_k\|_{0}}
\quad,\quad
\frac{\enorm{\varepsilon(\hat\lambda_k\hat\psi_k)}}{\enorm{\psi_k-\hat\psi_k}}
~.
\end{align}
Although our error estimates were only established for the energy
norm, we consider function error in $L^2$ as well.  The choice of
$\psi_k$ is normalized by taking $\|\psi_k\|_{0}=\|\hat\psi_k\|_{0}=1$
and $\alpha_k\doteq (\psi_k,\hat\psi_k)\geq 0$.
We note that $S\hat\psi_k=\alpha_k\psi_k$ in this case, so
$\|\psi_k-\hat\psi_k\|\neq \|\psi_k-S\hat\psi_k\|$ in either of the
two norms.  However, in either norm, we have 
$\|\psi_k-S\hat\psi_k\|\leq \|\psi_k-\hat\psi_k\|\leq
\sqrt{2}\|\psi_k-S\hat\psi_k\|$, with equality in the upper bound
achieved for the $L^2$-norm when $\alpha_k=0$, and equality approached
in the lower bound (for either norm) as $\alpha_k$ approaches $1$.
For these sequences of discretizations, $\alpha_k$ approached $1$ very
quickly, so there were no appreciable differences between effectivities
using $\|\psi_k-\hat\psi_k\|$ in the denominator versus using $\|\psi_k-S\hat\psi_k\|$.
In Figure~\ref{fig:SlitDiskClusterPvsHP}, we provide plots of the
effectivities for both families of discretizations, and $k=1,52,53$,
recalling that the computed eigenmodes $\hat\psi_{52}$ and
$\hat\psi_{53}$ do not begin to meaningfully approximate $\psi_{52}$
and $\psi_{53}$, respectively, until $p=7$.  The case $k=1$, for which
nothing unexpected happens, is considered merely as a comparative
baseline.  The poor effectivities of the estimates of the eigenmode
approximation errors for $k=52,53$ when $p<7$ stand out, and are not
surprising, because $\hat\psi_k$ is actually approximating $\psi_j$
for some $j\neq k$ when $p<7$.  In light of this, we also provide plots of the
ratios $\|\varepsilon_k(\hat\lambda_k\hat\psi_k)\|/\|\psi_j-\hat\psi_k\|$ for both norms, where
$\psi_j$ is also normalized as described above, as well as plots of
$\enorm{\varepsilon(\hat\lambda_k\hat\psi_k)}^2/(\hat\lambda_k-\lambda_j)$
for the eigenvalue $\lambda_j$ corresponding to $\psi_j$; these plots are given
in gray in Figure~\ref{fig:SlitDiskClusterPvsHP}.  
Complementary
eigenvalue and eigenmode convergence graphs are given in
Figure~\ref{fig:SlitDiskClusterPConvergence}.  Since the convergence
histories for both the $p$ and $hp$-families were very similar, only
those for the $p$-family are shown.   For $p\geq 6$, we observe a
``staircase'' pattern to the errors, where at first it would appear
that the error decreases only at odd $p$.  
This kind of staircase convergence phenomenon has been observed
elsewhere for $p$-method
  approximations on large elements (cf.~\cite[Figures 2.4 and
  2.5]{Babuska1992}).  In our case, we expect that this effect is
  present for $k=52,53$ because the corresponding eigenmodes oscillate
  within the large elements away from the origin---the behavior of the
  eigenmodes near the origin is resolved well by the highly graded
  mesh.
Since $\hat\psi_k$ does not approximate $\psi_k$ for $k=52,53$ when
$p\leq 6$, the corresponding errors in
Figure~\ref{fig:SlitDiskClusterPConvergence} are really only
meaningful for $p\geq 7$, at which point the errors take their first
significant drop and begin the odd-even staircase pattern.
\begin{figure}
	\centering
	\subfloat[$p$-version: Eigenvalue  effectivity.]
	{
	\includegraphics[width=0.45\textwidth]{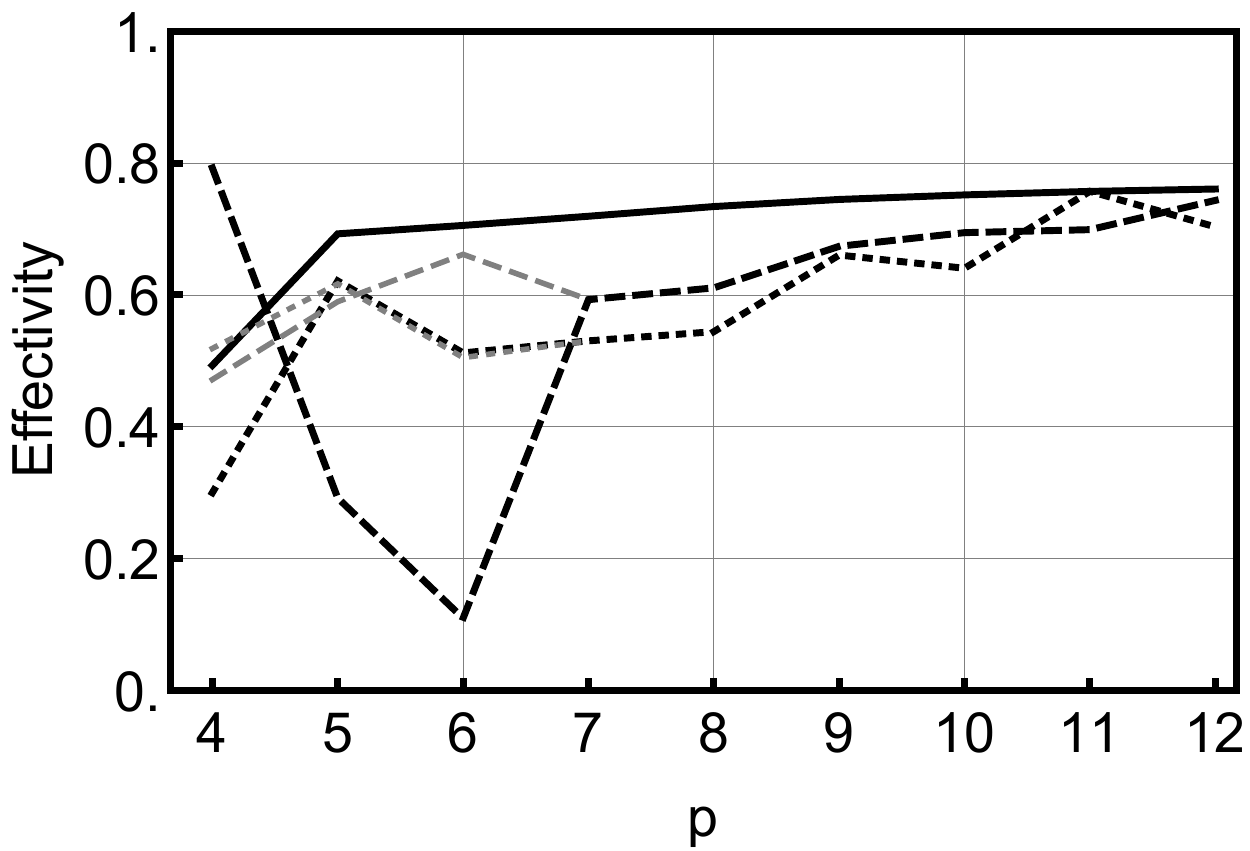}
	}\quad
	\subfloat[$hp$-version: Eigenvalue  effectivity.]
	{
	\includegraphics[width=0.45\textwidth]{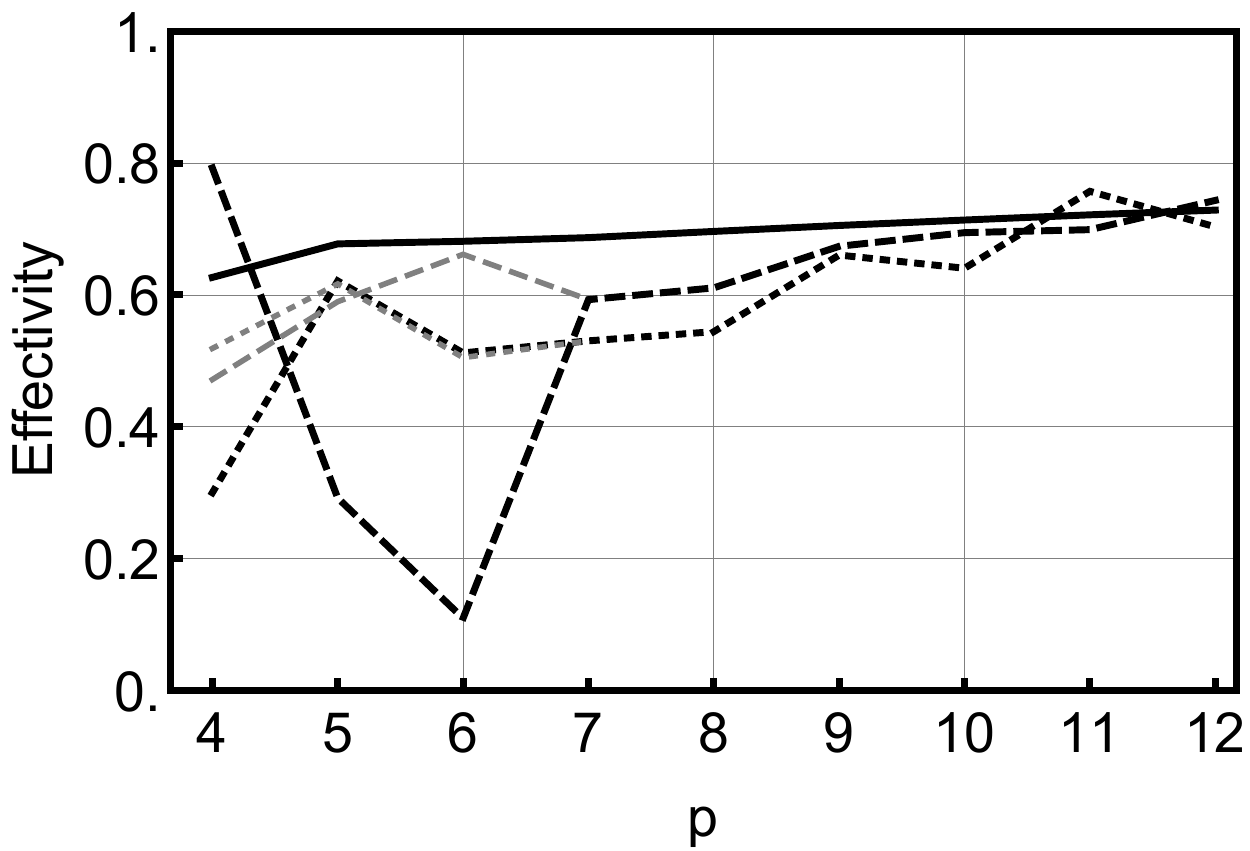}
	}\\
	\subfloat[$p$-version: Eigenvector  effectivity in $L^2$-norm.]
	{
	\includegraphics[width=0.45\textwidth]{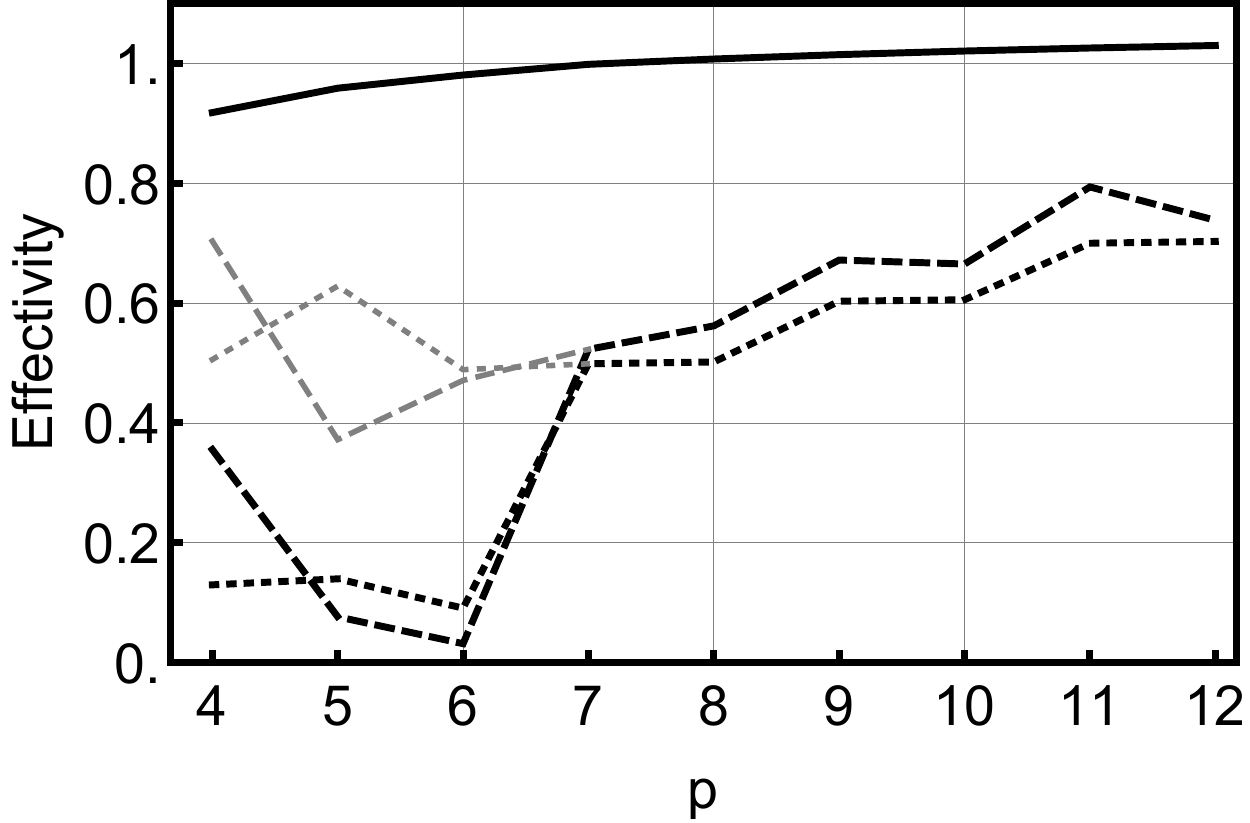}
	}\quad
	\subfloat[$hp$-version: Eigenvector  effectivity in $L^2$-norm.]
	{
	\includegraphics[width=0.45\textwidth]{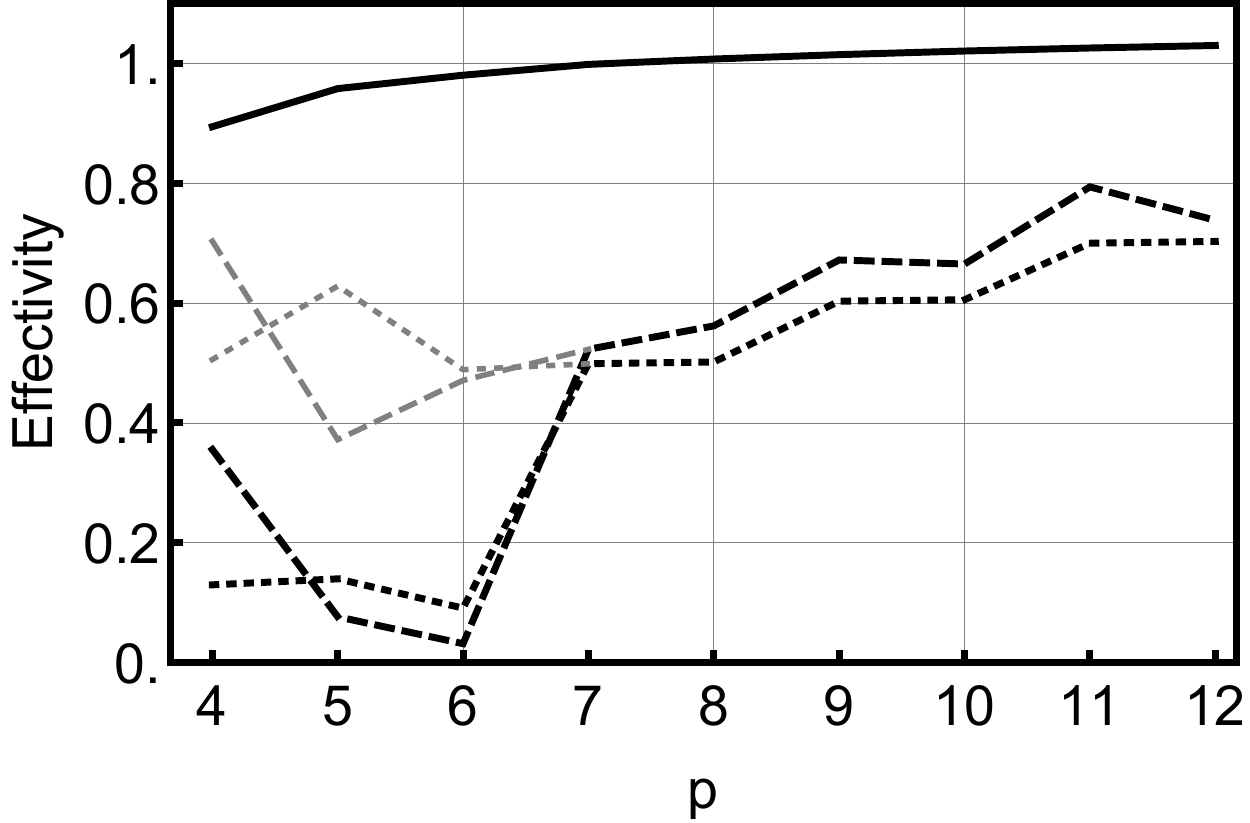}
	}\\
	\subfloat[$p$-version: Eigenvector  effectivity in $H^1$-seminorm.]
	{
	\includegraphics[width=0.45\textwidth]{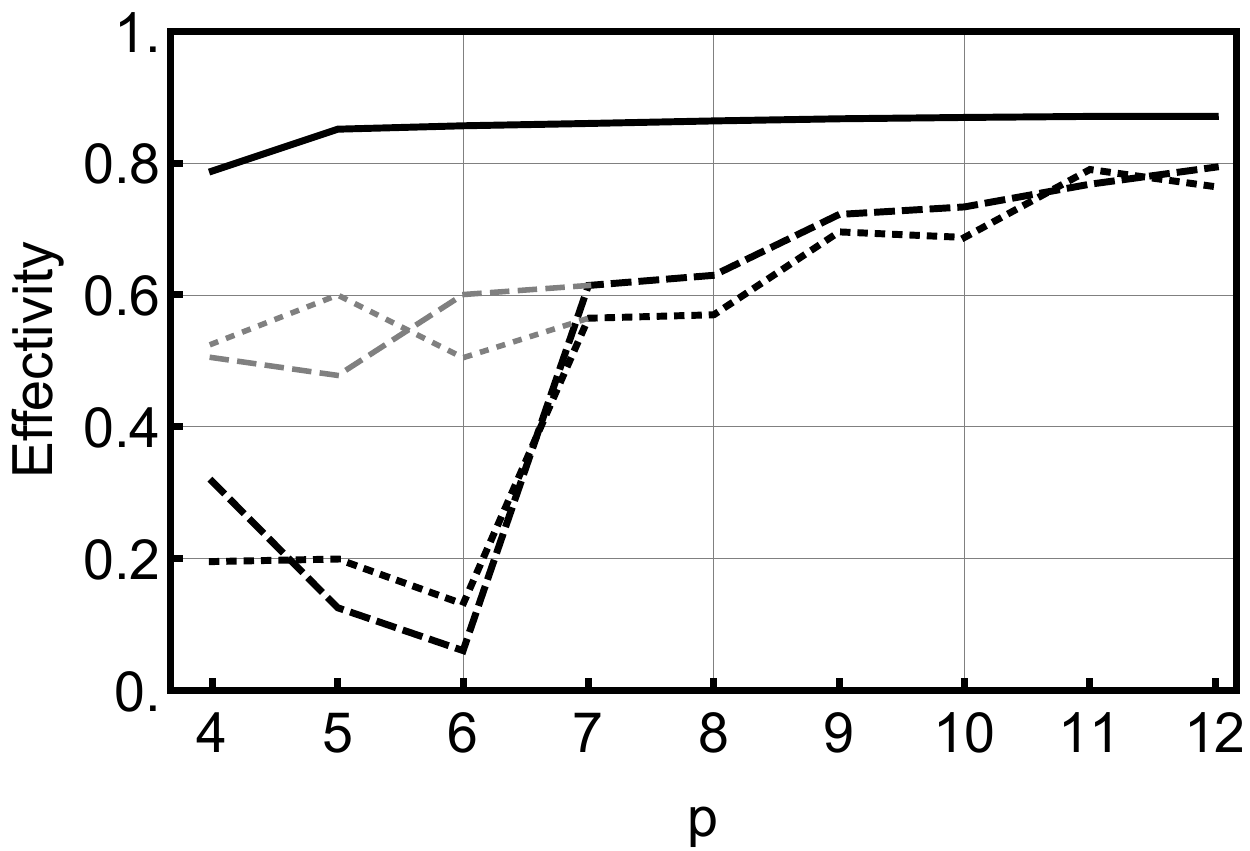}
	}\quad
	\subfloat[$hp$-version: Eigenvector  effectivity in $H^1$-seminorm.]
	{
	\includegraphics[width=0.45\textwidth]{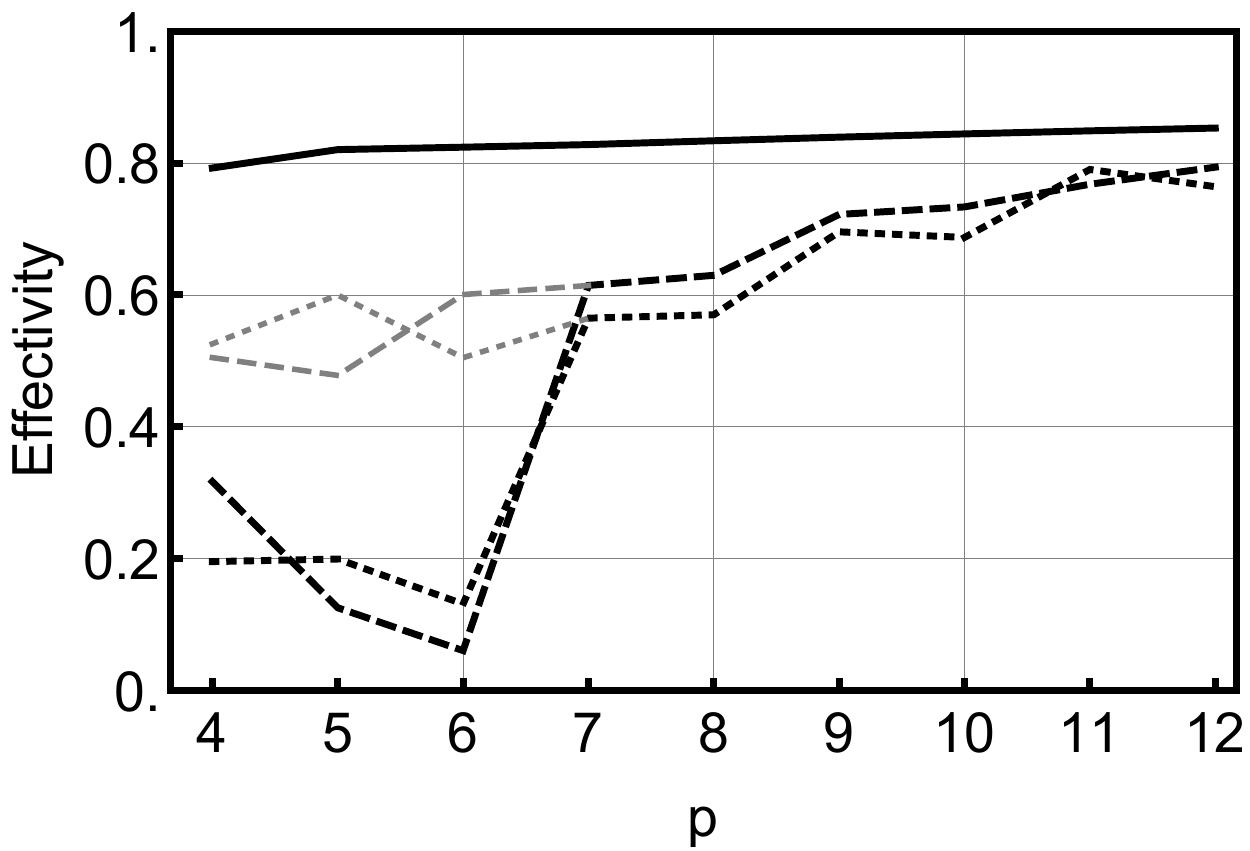}
	}
	\caption{Slit disk; Eigenvalue and eigenvector effectivity
          ratios for $(\lambda_1,\psi_1)$ (solid lines), $(\lambda_{52},\psi_{52})$
          (dashed lines), $(\lambda_{53},\psi_{53})$ (dotted lines),
          for both the $p$- and $hp$-version.  Gray curves correspond
          to the effectivities of the estimates when the computed
          eigenmode $\hat\psi_k$ is compared with the true eigenmode
          $\psi_j$ that it most closely resembles, and the computed
          eigenvalue $\hat\lambda_k$ is compared with the
          corresponding $\lambda_j$.}
	\label{fig:SlitDiskClusterPvsHP}	
\end{figure}
\begin{figure}
\centering
\subfloat[Eigenvalue error.]
{
\includegraphics[width=0.45\textwidth]{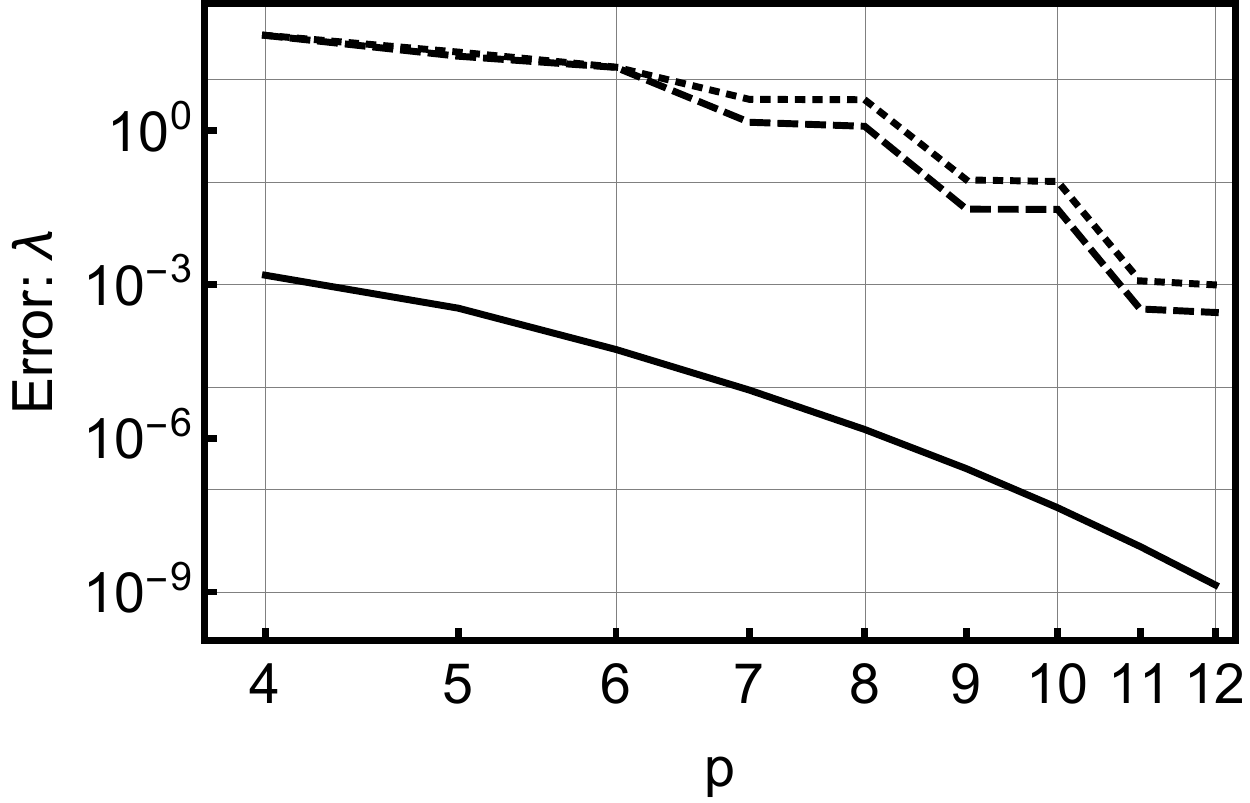}}\\
\subfloat[Eigenvector error in squared $L^2$-norm.]
{
\includegraphics[width=0.45\textwidth]{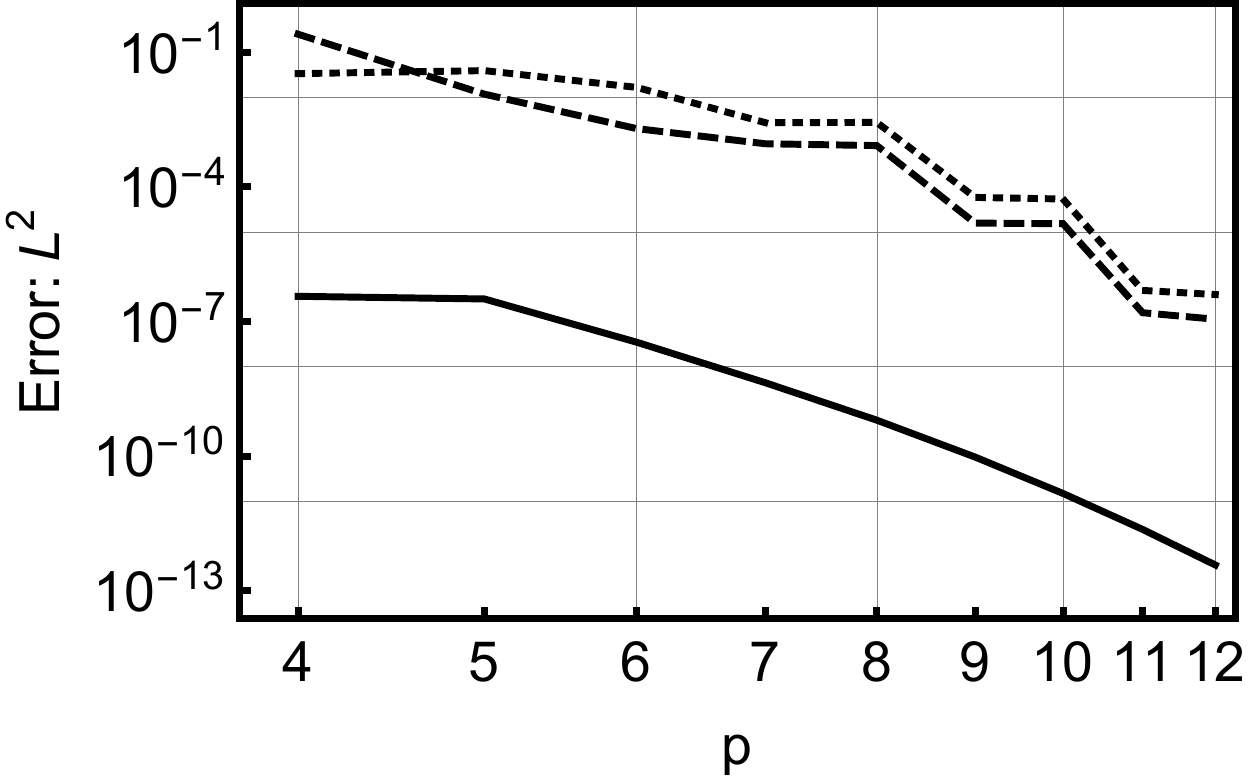}
}\quad
\subfloat[Eigenvector error in squared $H^1$-seminorm.]
{
\includegraphics[width=0.45\textwidth]{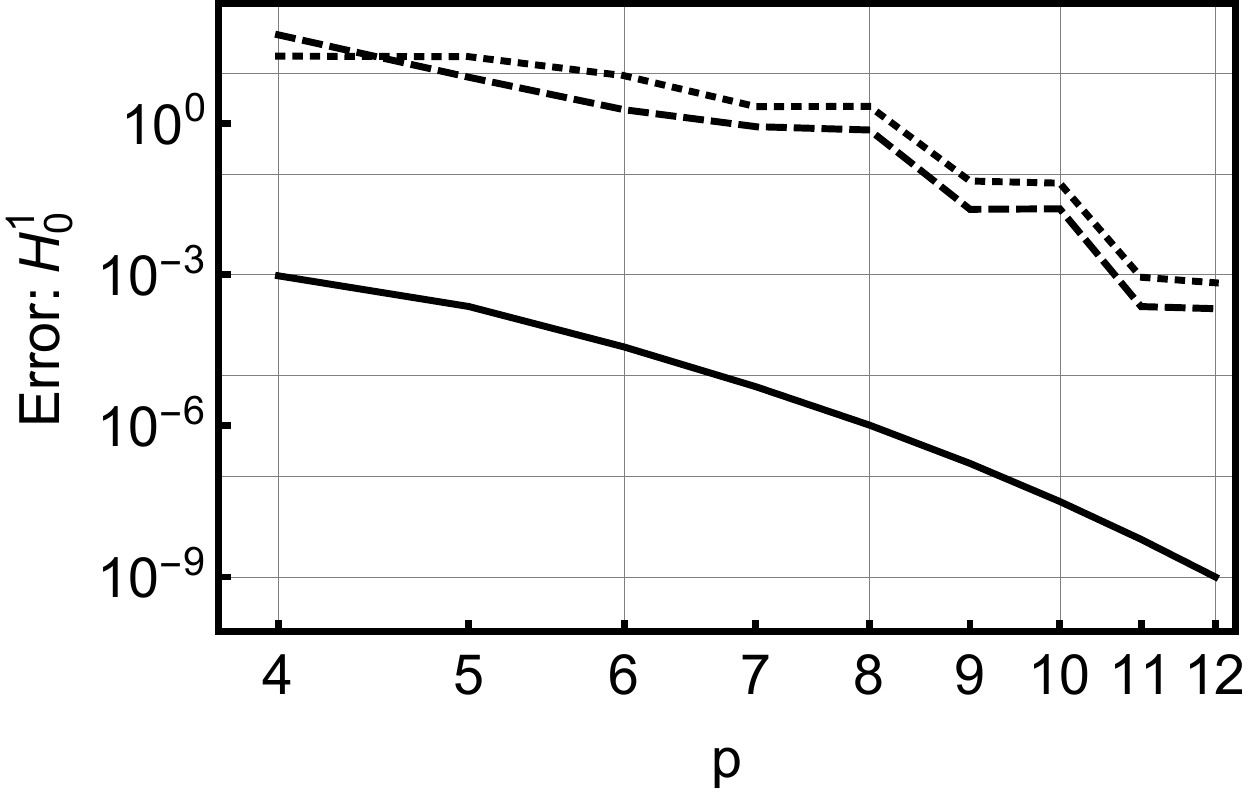}
}
\caption{Slit disk;  Eigenvalue and eigenvector convergence for $(\lambda_1,\psi_1)$ (solid lines), $(\lambda_{52},\psi_{52})$
		  (dashed lines), $(\lambda_{53},\psi_{53})$ (dotted lines)
		  for the $p$-version.
\label{fig:SlitDiskClusterPConvergence}}
\end{figure}

Before moving on to an empirical investigation of Proposition~\ref{BauerFike}, we provide
a few more remarks concerning the gray curves in Figure~\ref{fig:SlitDiskClusterPvsHP}.
Recalling~\eqref{EigenvectorErrors1}, we have that
$\enorm{\varepsilon(\hat\lambda_k\hat\psi_k)}$ approximates
$\enorm{\hat\psi_k-\cA^{-1}(\hat\lambda_k\hat\psi_k)}$ which, in turn,
approximates $\enorm{(I-S) \hat\psi_k}$, where $S$ is the spectral
projector for \textit{some} subset of eigenvalues $\Lambda$.  The
theory does not force any particular choice of $\Lambda$.  For
example, one may choose $\Lambda=\{\lambda_j\}$ for some $j\neq k$.
The consequences of different choices are reflected in the
``constant'' $C(\hat\lambda_k,\Lambda)$, which will blow up as
$\hat\lambda_k\to\lambda_k$ if $\lambda_k\not\in\Lambda$.
Informally, we can say that $\enorm{\varepsilon(\hat\lambda_k\hat\psi_k)}$ provides a
reasonable estimate of the
error $\enorm{\hat\psi_k-\psi_j}$ for \textit{some} eigenmode $\psi_j$
(after suitable normalization), but $\psi_j$ might not be an eigenmode
for $\lambda_k$ until the discretization is sufficiently rich.

Recalling the notation $K=\spec(H,G)$ and
$\tilde{K}=\spec (\tilde{H},G)$, we now illustrate the Bauer-Fike
estimate~\eqref{ApproxDef_Hausdorff}, using the matrix $2$-norm for
the upper bound.  Note that
  $\|G^{-1/2}\|_2^2=(\hat\mu_1)^{-1}$, where $\hat\mu_1$ is the
  smallest approximate eigenvalue for the cluster of interest.  More
  specifically, we will empirically compare both sides of the
  inequality
\begin{align}\label{BauerFikeExperiments}
\mathrm{dist}(K,\tilde{K})\leq \|H-\tilde{H}\|_2/\hat\mu_1~,
\end{align}
i.e. we compare the quantities
$\mathrm{dist}(K,\tilde{K})$ and $\|H-\tilde{H}\|_2/\hat\mu_1$.
The behavior of the Hausdorff distance $\mathrm{dist}(K,\tilde{K})$
demonstrates that our computable $\tilde{H}$ provides a spectrally
accurate approximation of $H$, and is therefore suitable for more
nuanced estimates than those of trace-type
(e.g.~\eqref{TraceEstimates3a}).  Comparison of the both sides of the
inequality indicates that the norm bound is not a gross overestimate,
and may in fact provide a relatively tight bound.  

We first consider the scenario in which the cluster of interest is
fixed, namely $\{\lambda_{52},\lambda_{53}\}$, and we observe the
behavior of both sides of~\eqref{BauerFikeExperiments} as the
discretization parameter $p$ is increased.  In this case, $\hat\mu_1$
decreases toward $\lambda_{52}$ as $p$ increases.  The results of
these experiments are summarized in Figure~\ref{fig:BauerFikeCluster}.
\begin{figure}
	\centering
	\subfloat[$\mathrm{dist}(K,\tilde{K})$ (solid) and $\|H-\tilde{H}\|_2/\hat\mu_1$
        (dashed); visually indistinguishable.]
	{
	\includegraphics[width=0.45\textwidth]{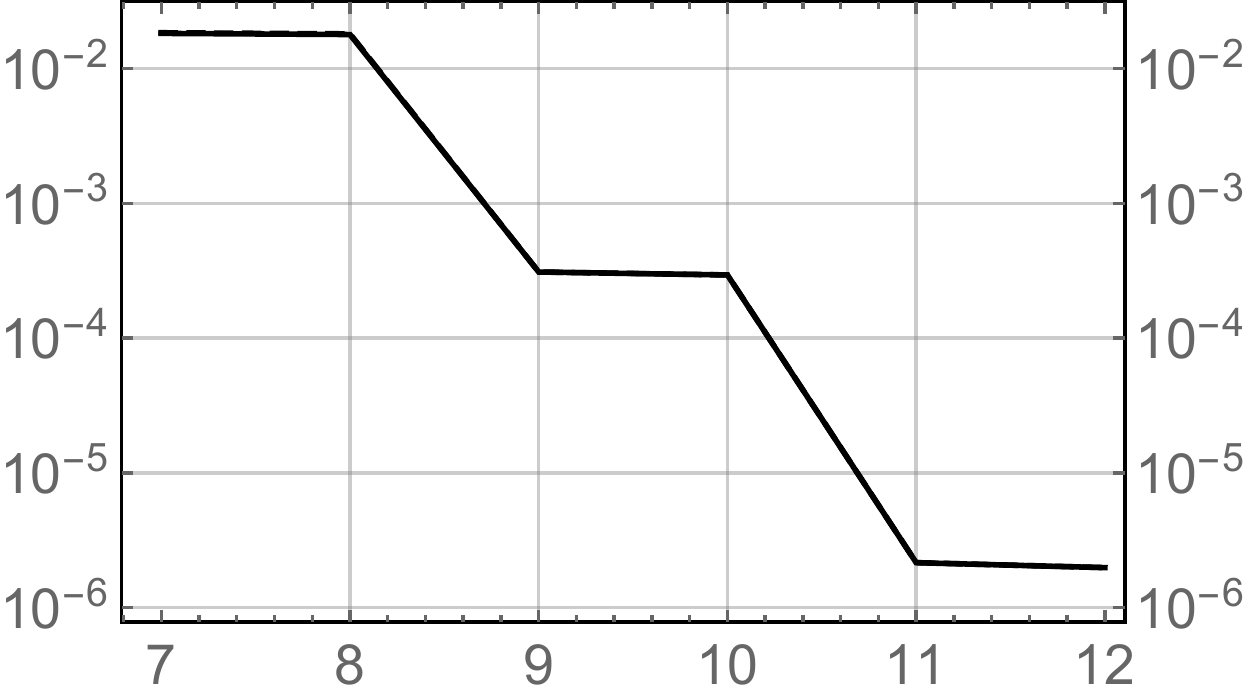}
	}\quad
	\subfloat[%
	Relative error $|a-b|/b$, where
          $a=\mathrm{dist(K,\tilde{K})}$ and $b=\|H-\tilde{H}\|_2/\hat\mu_1$.%
	]
	{
	\includegraphics[width=0.45\textwidth]{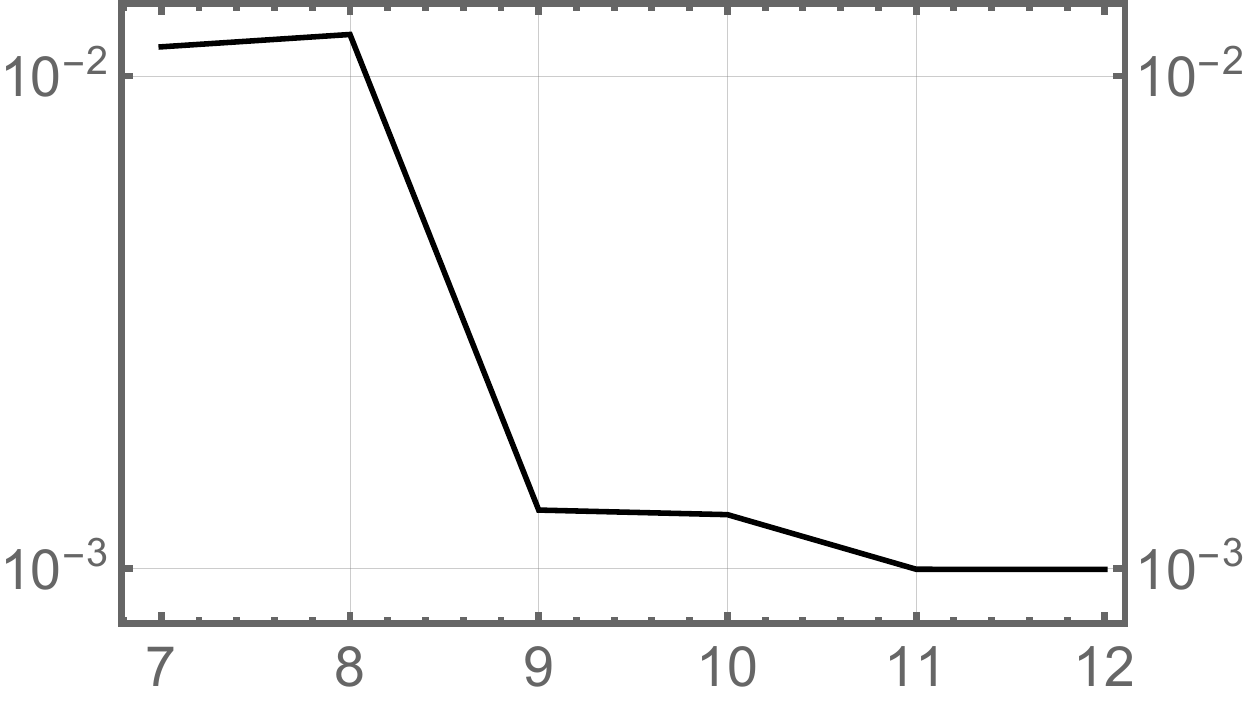}
	}
	\caption{Illustrating~\eqref{BauerFikeExperiments} for the Slit Disk eigenvalue cluster
          $\{\lambda_{52}, \lambda_{53}\}$, with
          $7\leq p\leq 12$.}

	\label{fig:BauerFikeCluster}
\end{figure}
We observe the same stairstep convergence of both quantities as
before, and note that $\|H-\tilde{H}\|_2/\hat\mu_1$ provides a very
tight upper bound on $\mathrm{dist}(K,\tilde{K})$ in this case.

We next consider the scenario in which the discretization parameter
$p$ is fixed, and the size of cluster is increased.  More
specifically, we consider two fixed discretizations, with $p=7$ and
$p=12$, and investigate both sides of~\eqref{BauerFikeExperiments}
as the cluster of interest, $\{\lambda_1,\dots,\lambda_r\}$, grows
with $r$, $1\leq r\leq 60$.  In this case,
$\hat\mu_1\approx\lambda_1$ is fixed as $r$ varies, so we expect the
upper bound $\|H-\tilde{H}\|_2/\hat\mu_1$ to become more pessimistic
as $r$ increases.  This expectation is confirmed in
Figure~\ref{fig:BauerFike}, where we nonetheless observe that both
quantities exhibit very similar qualitative behavior.
\begin{figure}
	\centering
	\subfloat[Both sides of the estimate at $p=7$.]
	{
	\includegraphics[width=0.45\textwidth]{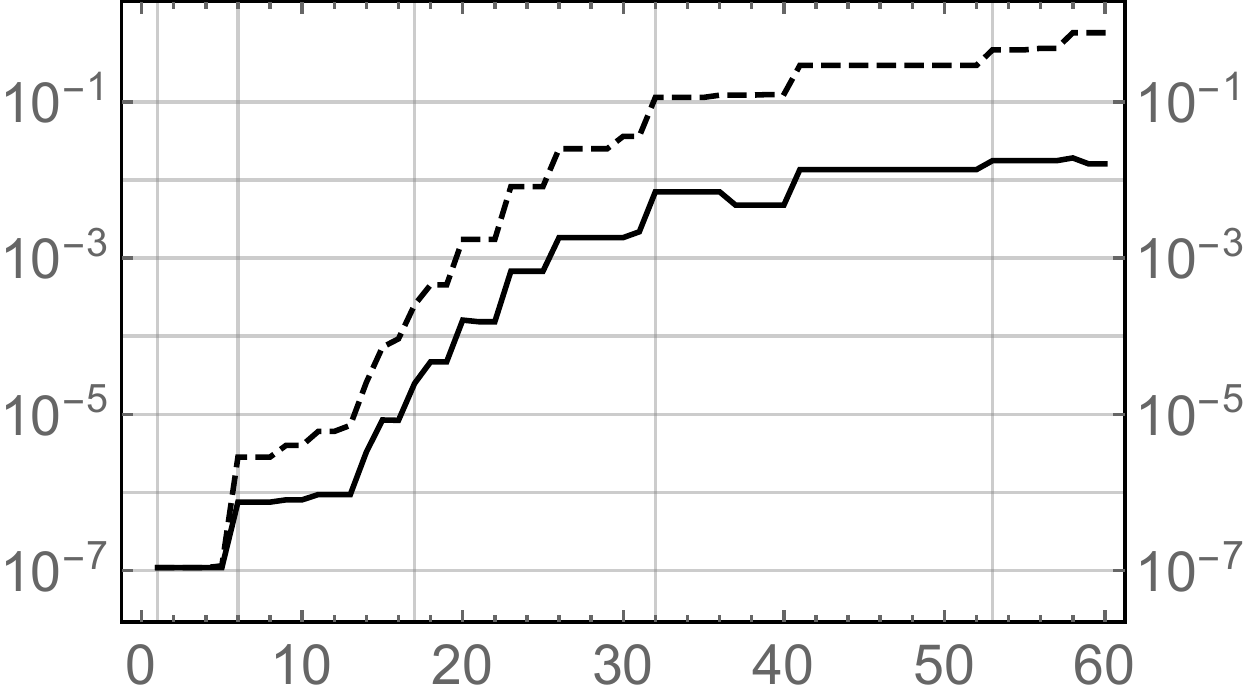}
	}\quad
	\subfloat[Both sides of the estimate at $p=12$.]
	{
	\includegraphics[width=0.45\textwidth]{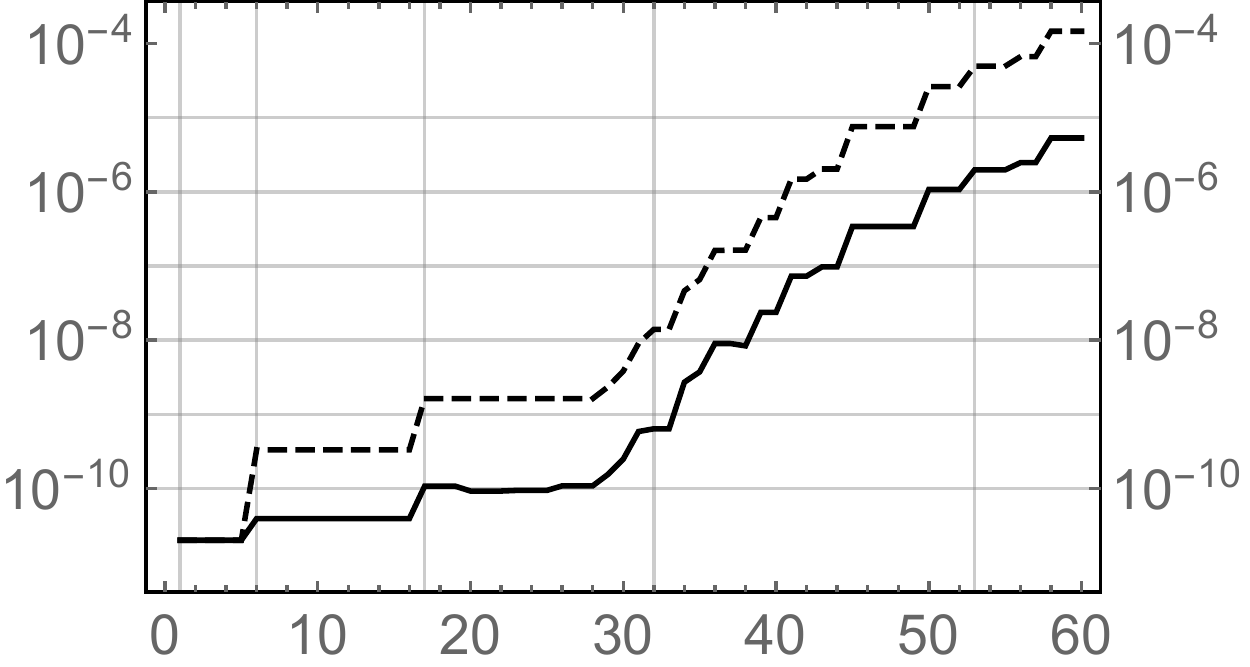}
	}
	\caption{\label{fig:BauerFike}Illustrating~\eqref{BauerFikeExperiments} for the Slit Disk eigenvalue cluster
          $\{\lambda_{1}, \ldots,\lambda_{j}\}$, $j=1,\ldots,60$, for
          $p=7$ (left) and $p=12$; $\mathrm{dist}(K,\tilde{K})$ (solid) and $\|H-\tilde{H}\|_2/\hat\mu_1$
        (dashed).}
\end{figure}
Without going so far as to make a conjecture, we note the correlation
between the more significant jumps in these graphs and the
inclusion in the cluster of interest of the eigenfunctions having the strongest singularities,
$\psi\sim r^{1/2}$ as $r\to 0$, namely $\{\psi_1,\psi_6,\psi_{17},\psi_{32},\psi_{53}\}$.
\end{example}

\begin{remark}[Mode Detection]
  The exact eigenmodes (\ref{eq:exactmodes}) have a tensor product
  structure.  This simplifies greatly the task of identifying the
  closest mode $\psi_{m,n}$ to some computed $\hat\psi$. The indices
  $m$ and $n$ represent the radial and angular parts, respectively, of
  $\psi_{m,n}$ and thus the mode detection approach is to find $m$ and
  $n$ that best correspond to the computed eigenmode. For identifying
  the angular part $m$, $\hat\psi$ is evaluated along circles at two
  randomly chosen radii $r_1$ and $r_2$, and the wave number along
  these circles is computed using the discrete Fourier transform (DFT).  In
  the unlikely case of the two values being different, a third radius
  is chosen for tie-breaking. We proceed similarly for the radial
  direction. However, in the absence of equivalent to the DFT, we
  project onto a set of admissible radial profiles and choose the one
  that is closest in the $L^2$ sense.
\end{remark} 
\section{A Posteriori Estimates for Clusters: Numerical Experiments}\label{sec:Exper}

In this section the focus is on a set of problems where the spectrum
has a structure rich in clusters that can be identified a priori with
high confidence. In this setting, it is best to estimate eigenvalue
error and associated invariant subspace error over the clusters either
with trace estimates such
as~\eqref{TraceEstimates4a}-\eqref{TraceEstimates4b} or via looking
directly at $\spec(\tilde{H},G)$.  As a starting point, we consider a
pair of complementary problems posed on half-disks, first studied by
Jacobson et al. ~\cite{Jakobson2006141}, where they were shown to have
identical spectra.  We then consider two sets of configurations
derived from the original pair by connecting these half-disks with
narrow bridges, see Figure~\ref{fig:BridgeDomain}.  This family of
configurations is such that pairs of nearby eigenvalues are expected
around each of the eigenvalues of the isospectral problems.

\subsection{Isospectral Problems}\label{sec:Reference Domain}
Let $\Omega=\{(x,y):\,x^2+y^2<1\,,\,y>1\}$
be the half-disk,
with boundary $\partial\Omega$ split into four parts,
$\partial\Omega=\gamma_1\cup\gamma_2\cup\gamma_3\cup\gamma_4$, where
\begin{align*}
\gamma_1&=\{(r\cos\theta,r\sin\theta):\;\theta=0\,,\,0\leq r\leq1\}\cup
\{(r\cos\theta,r\sin\theta):\;r=1\,,\,0\leq\theta\leq \pi/4\}~,\\
\gamma_2&=\{(r\cos\theta,r\sin\theta):\;r=1\,,\,\pi/4\leq\theta\leq 3\pi/4\}~,\\
\gamma_3&=\{(r\cos\theta,r\sin\theta):\;r=1\,,\,3\pi/4\leq\theta\leq \pi\}~,\\
\gamma_4&=\{(r\cos\theta,r\sin\theta):\;\theta=\pi\,,\,0\leq r\leq1\}~.
\end{align*}
The domain and boundary decomposition are shown in
Figures~\ref{fig:A Domain} and~\ref{fig:B Domain}.  
We consider a pair of complementary
problems in which we alternately apply Dirichlet and Neumann
conditions on the even and odd parts of the boundary,
\begin{align}
-\Delta \psi = \lambda\psi\quad,\quad \psi=0\mbox{ on }
  \gamma_1\cup\gamma_3\quad,\quad
\partial\psi/\partial n = 0\mbox{ on }\gamma_2\cup\gamma_4~,\label{Jacobson1}\\
-\Delta \psi = \lambda\psi\quad,\quad \psi=0\mbox{ on }
  \gamma_2\cup\gamma_4\quad,\quad
\partial\psi/\partial n = 0\mbox{ on }\gamma_1\cup\gamma_3~. \label{Jacobson2}
\end{align}
As was proved in~\cite{Jakobson2006141}, these problems are
\textit{isospectral}.  In other words, the eigenvalues
of~\eqref{Jacobson1} are identical to those of~\eqref{Jacobson2}.
In Figure~\ref{fig:Isospectral}, we show a few eigenvectors associated
with both problems.  These were computed using refinement strategy
that ensures that the lower part of the spectrum is accurately
resolved.  Reference values for the first fifteen eigenvalues are
given in Table~\ref{tab:Isospectral}.
\begin{figure}
	\centering
	\subfloat[Problem~\eqref{Jacobson1}]
	{\label{fig:A Domain}\includegraphics[width=0.45\textwidth]{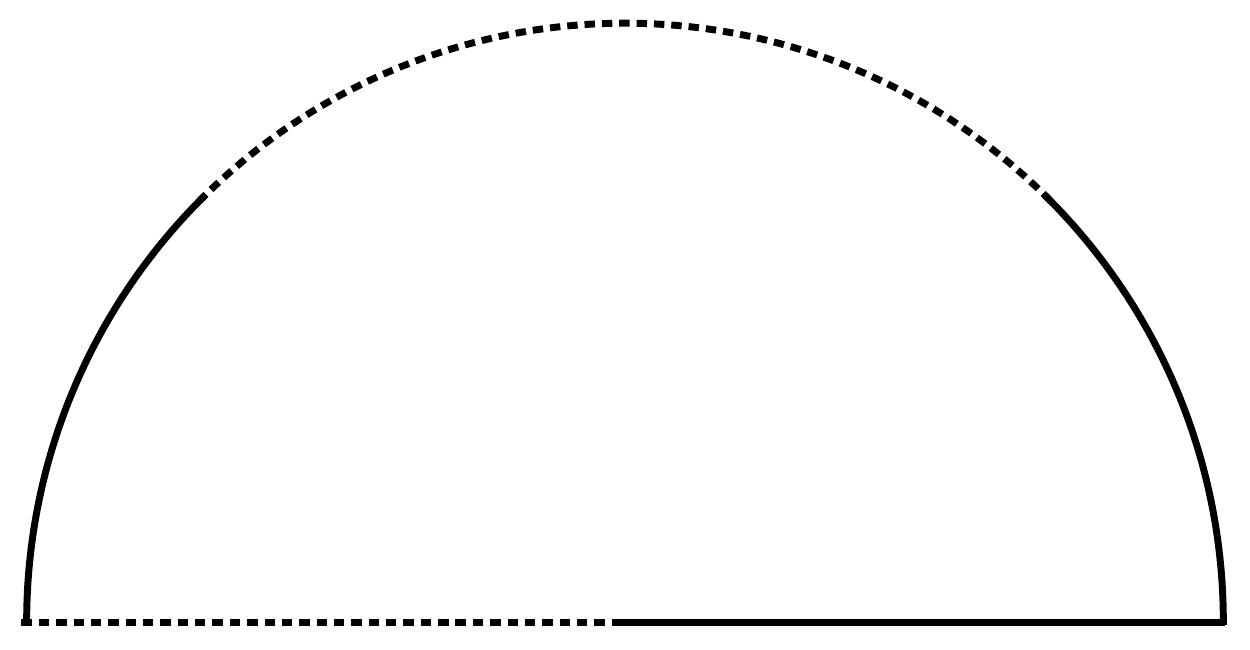}}\quad
	\subfloat[Problem~\eqref{Jacobson2}]
	{\label{fig:B
            Domain}\includegraphics[width=0.45\textwidth]{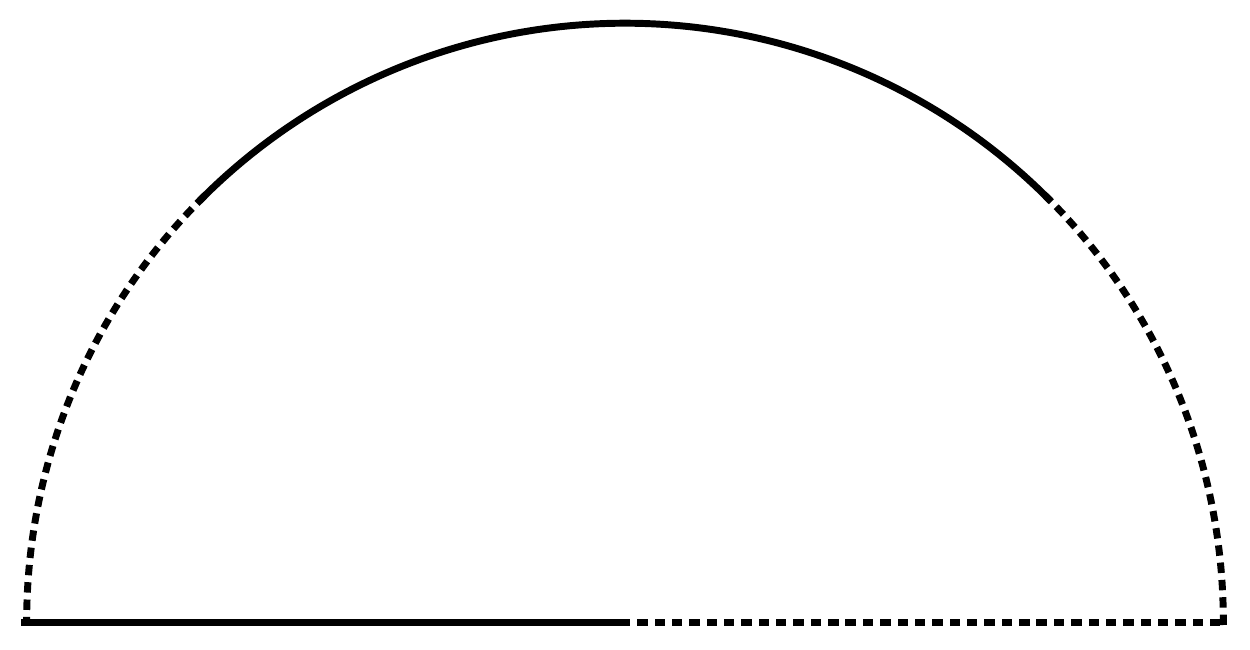}}\\
	\subfloat[$\psi_1$ for Problem~\eqref{Jacobson1}]
	{\label{fig:A Modes 1}\includegraphics[width=0.45\textwidth]{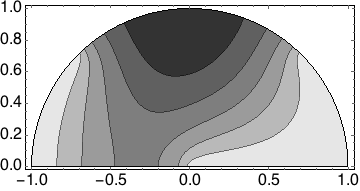}}\quad
	\subfloat[$\psi_1$ for Problem~\eqref{Jacobson2}]
	{\label{fig:B Modes 1}\includegraphics[width=0.45\textwidth]{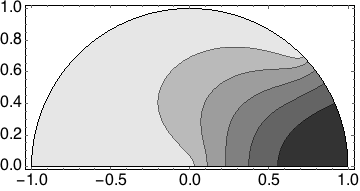}}\\
	\subfloat[$\psi_5$ for Problem~\eqref{Jacobson1}]
	{\label{fig:A Modes 5}\includegraphics[width=0.45\textwidth]{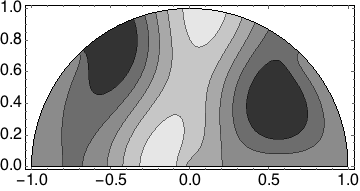}}\quad
	\subfloat[$\psi_5$ for Problem~\eqref{Jacobson2}]
	{\label{fig:B Modes 5}\includegraphics[width=0.45\textwidth]{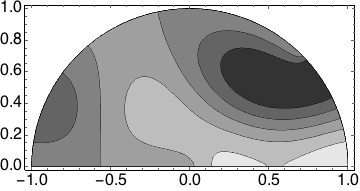}}\\
	\subfloat[$\psi_{11}$ for Problem~\eqref{Jacobson1}]
	{\label{fig:A Modes 11}\includegraphics[width=0.45\textwidth]{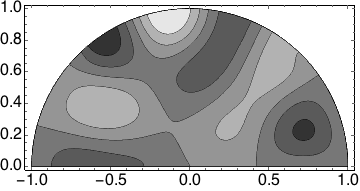}}\quad
	\subfloat[$\psi_{11}$ for Problem~\eqref{Jacobson2}]
	{\label{fig:B Modes 11}\includegraphics[width=0.45\textwidth]{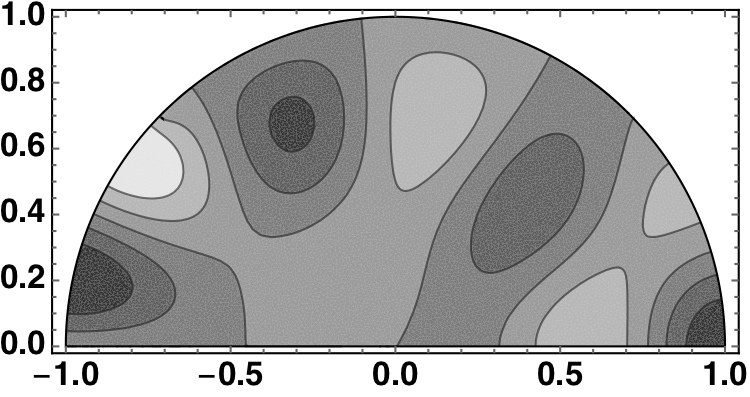}}
	\caption{Isospectral problems. The components of the boundary having Dirichlet and Neumann
	 conditions are drawn with solid and dotted lines, 
	 respectively, in (a) and (b).}\label{fig:Isospectral}
\end{figure}

\begin{table}
\caption{Isospectral problems.  Reference values for the lowest
  $15$ eigenvalues for
  problems~\eqref{Jacobson1}-\eqref{Jacobson2}.\label{tab:Isospectral}}
\centering
	\begin{tabular}{cl|cl|cl}
		$i$ & \multicolumn{1}{c}{$\lambda_i$}&$i$ & \multicolumn{1}{c}{$\lambda_i$} &$i$ & \multicolumn{1}{c}{$\lambda_i$} \\ \hline
		1 & 4.50351270364&6 & 4.63221446587$\times 10^1$ & 11 &  8.34387148427$\times 10^1$\\
		2 & 1.35208410401$\times 10^1$&7 & 5.13074786442$\times 10^1$ & 12 & 9.11669451784$\times 10^1$\\
		3 & 1.98639263212$\times 10^1$&8 & 6.24572729970$\times 10^1$ & 13 & 1.04631385585$\times 10^2$\\
		4 & 3.04933490983$\times 10^1$&9 & 6.74067396593$\times 10^1$ & 14 & 1.09930498884$\times 10^2$\\
		5 & 3.51893179474$\times 10^1$&10 & 7.87626319950$\times 10^1$ & 15 & 1.11846648035$\times 10^2$\\ \hline	
	\end{tabular}
\end{table}

\subsection{Bridge Configurations}\label{sec:Bridge
  Configurations}
By joining two of the isospectral drums above with a narrow bridge, we
can create a family of configurations in which there are clusters of
eigenvalues throughout the spectrum near predictable numbers,
i.e. near the eigenvalues of the isospectral domains.  We take the
domain to be two half-disks of raduis $1$ connected by a
$1/10\times 1/4$ rectangular bridge, see
Figure~\ref{fig:BridgeDomainLabeled}.  In this figure, we have labeled
segments of the boundary A-L, and we obtain different configurations
by assigning either homogeneous Dirichlet or Neumann conditions to
these edges.  Taking both sides of the bridge to have the same type of
boundary condition, either both Dirichlet or both Neumann, there are
20 such configurations that are associated with the isospectral pair
from Section~\ref{sec:Reference Domain}, 10 having the Dirichlet
bridge and 10 having the Neumann bridge.  These are tabulated in
Table~\ref{tbl:BridgeDomainConfigurations}, and two such
configurations are shown in Figures~\ref{fig:BridgeDomainD2}
and~\ref{fig:BridgeDomainN9}.  For each eigenvalue
of~\eqref{Jacobson1}-\eqref{Jacobson2}, we expect to have a pair of
eigenvalues on the Bridge domain that are close to it, regardless of
which of the 20 configurations of boundary conditions that we use.
This is illustrated in Table~\ref{tbl:BridgeDomainEigenvalues}, where
we give reference values for the first 12 eigenvalues of the
configurations pictured in Figure~\ref{fig:BridgeDomain}, together
with the first 6 eigenvalues of the isospectral domains for
comparison.  Contour plots of the ninth and tenth eigenvectors for
both of these configurations are given in
Figure~\ref{fig:BridgeDomainEigenvectors}.  As above, we employ
refinement strategies for our experiments that ensure that the lower
part of the spectrum is accurately resolved and the observed phenomena
are not simply artifacts of the discretization, see
Figure~\ref{fig:BridgeDomainMesh}.
\begin{figure}
\centering
\subfloat[Bridge domain with labeled segments.]
{\label{fig:BridgeDomainLabeled}
\includegraphics[width=0.45\textwidth]{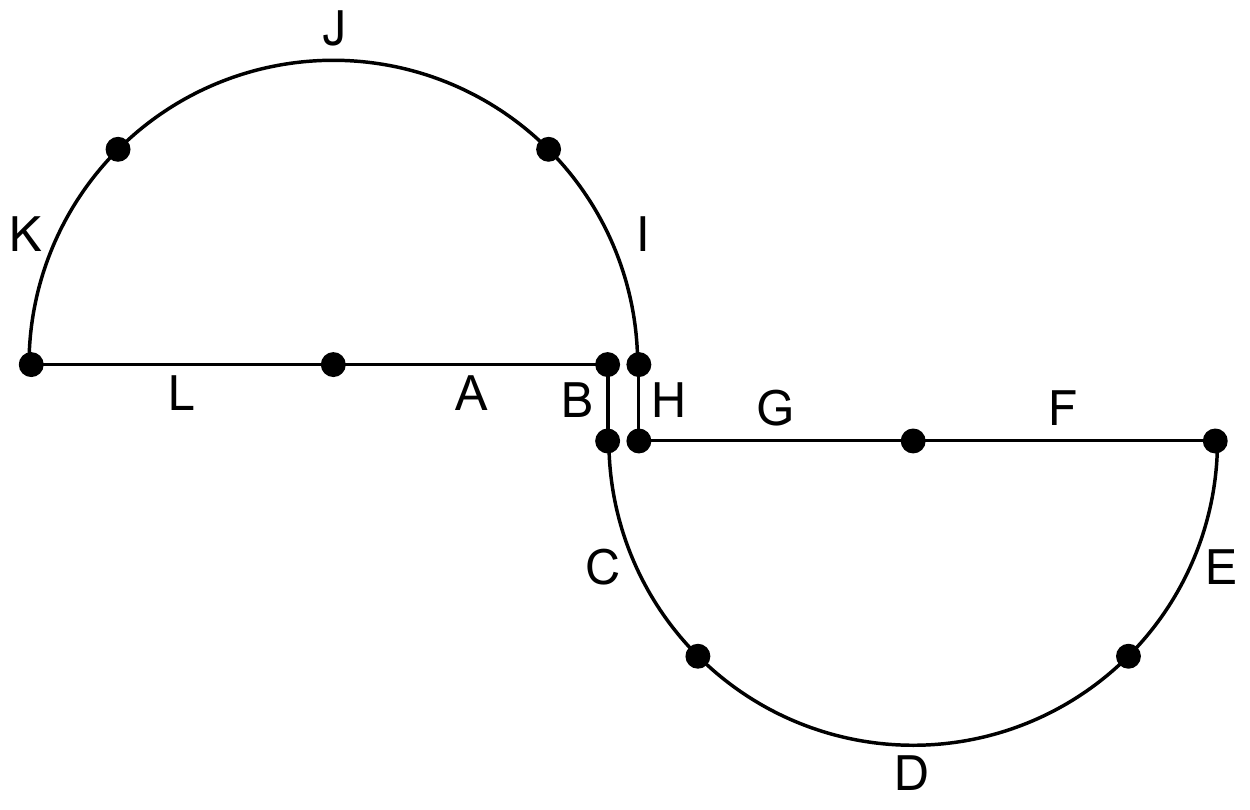}}\quad
\subfloat[$p$-type mesh used in experiments.]
{\label{fig:BridgeDomainMesh}
\includegraphics[width=0.45\textwidth]{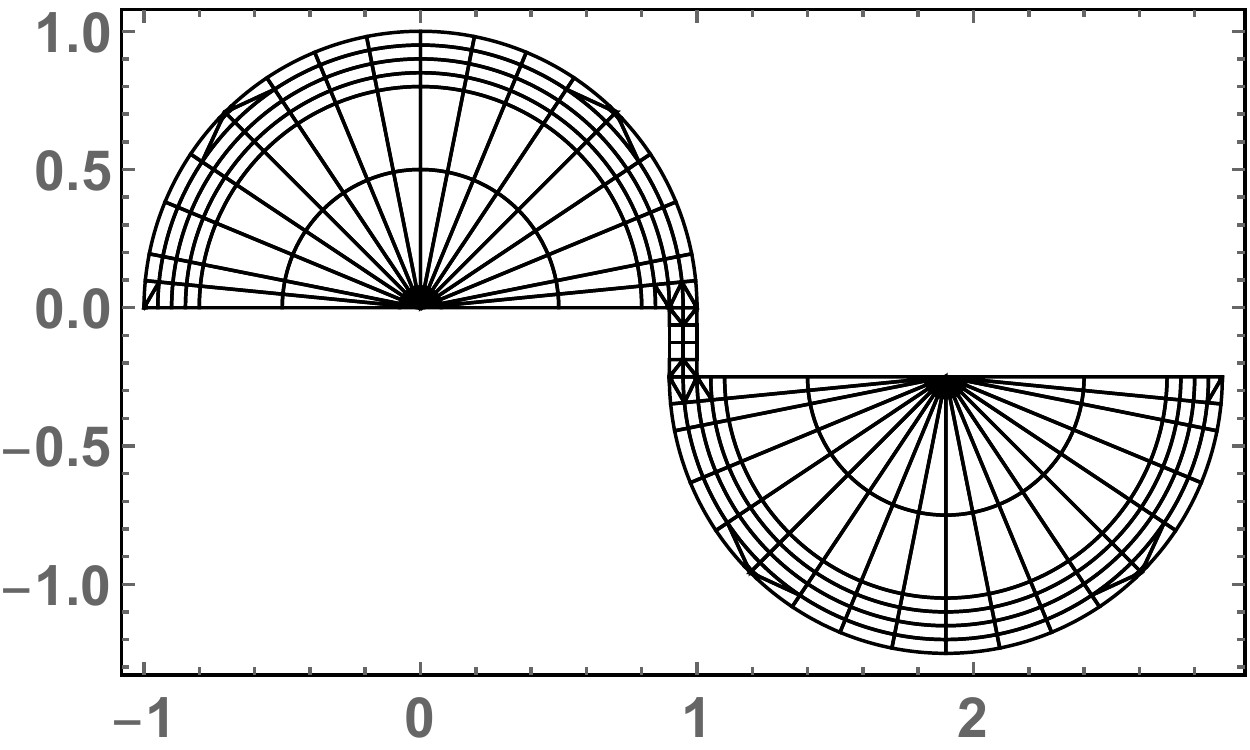}}\\
\subfloat[Dirichlet bridge, Case 2.]
{\label{fig:BridgeDomainD2}
\includegraphics[width=0.45\textwidth]{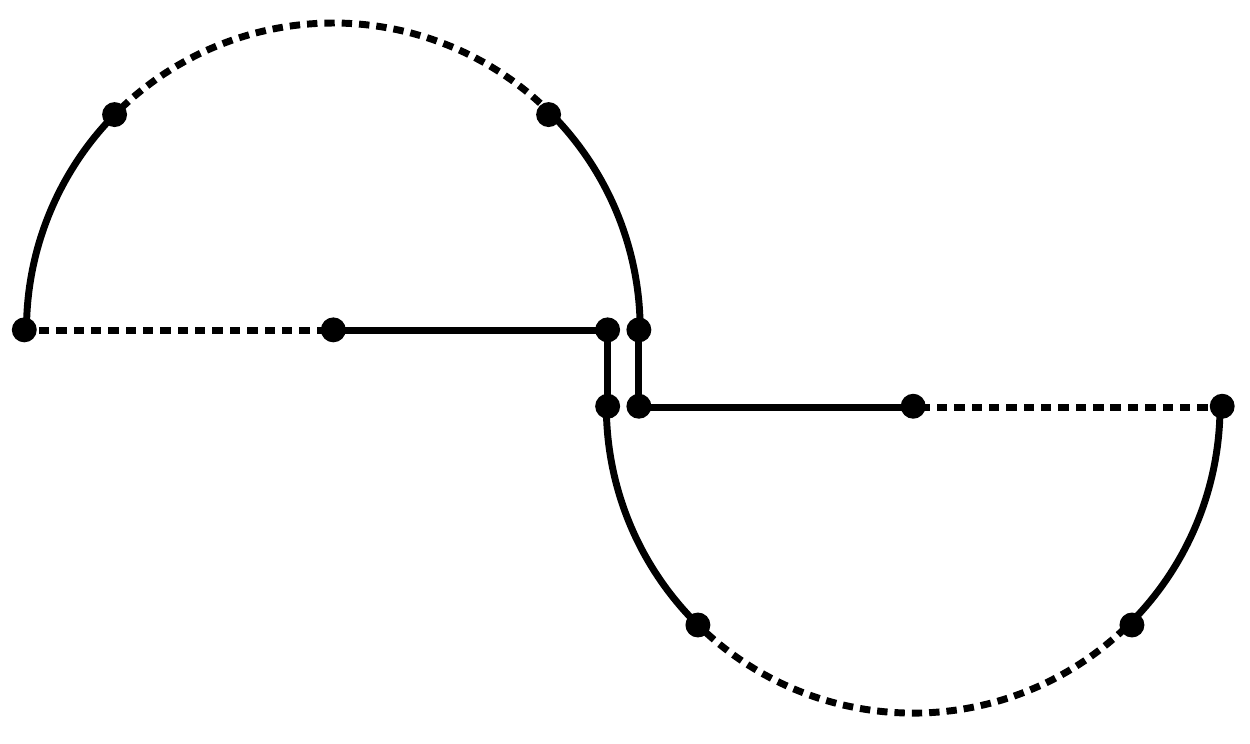}}\quad
\subfloat[Neumann bridge, Case 9.]
{\label{fig:BridgeDomainN9}
\includegraphics[width=0.45\textwidth]{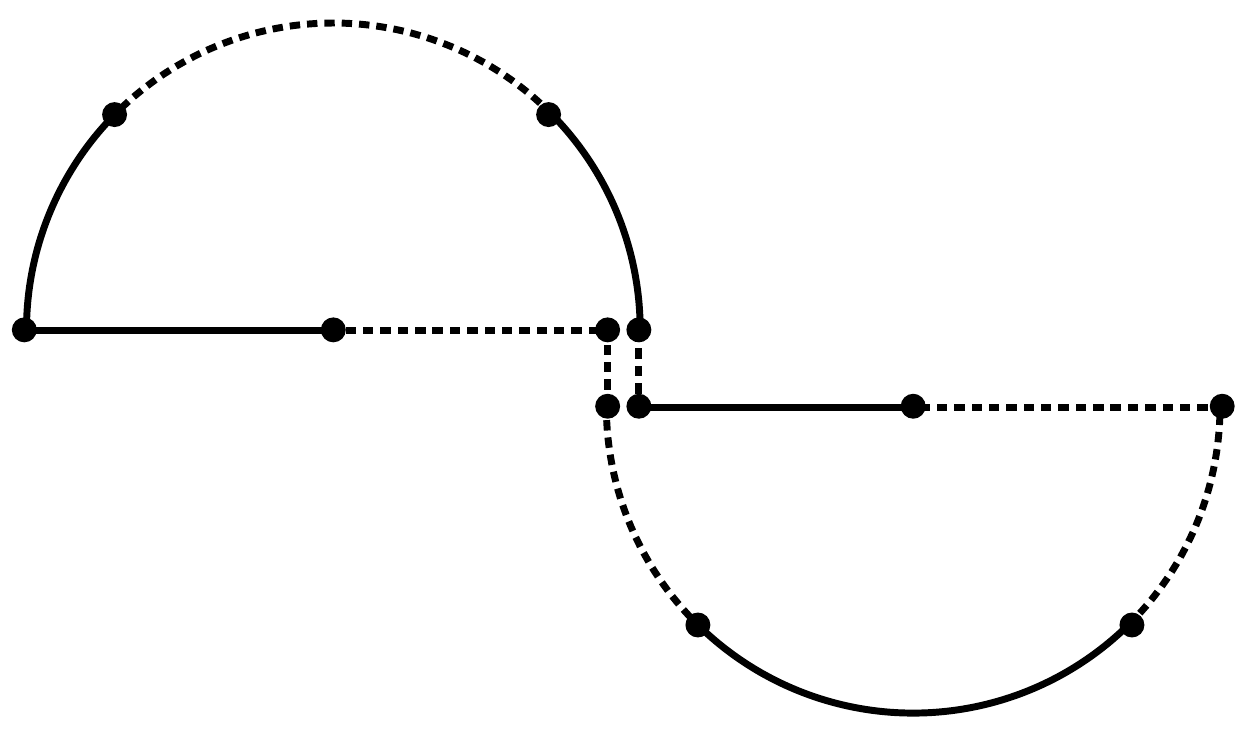}}
\caption{Bridge domain, a computational mesh and two 
  configurations. \label{fig:BridgeDomain}}
\end{figure}
\begin{table}
  \centering
  \begin{tabular}{ccccccccccccc}
    Case & \text{A} & \text{B} & \text{C} & \text{D} & \text{E} & \text{F} & \text{G} & \text{H} & \text{I} & \text{J} & \text{K} & \text{L} \\ \hline
    1 &	 \text{D} & \text{X} & \text{D} & \text{N} & \text{D} & \text{D} & \text{N} & \text{X} &
                                                                                                 \text{D} & \text{N} & \text{D} & \text{N} \\
    2 &	 \text{D} & \text{X} & \text{D} & \text{N} & \text{D} & \text{N} & \text{D} & \text{X} &
                                                                                                 \text{D} & \text{N} & \text{D} & \text{N} \\
    3 &	 \text{D} & \text{X} & \text{N} & \text{D} & \text{N} & \text{N} & \text{D} & \text{X} &
                                                                                                 \text{D} & \text{N} & \text{D} & \text{N} \\
    4 &	 \text{D} & \text{X} & \text{N} & \text{D} & \text{N} & \text{D} & \text{N} & \text{X} &
                                                                                                 \text{D} & \text{N} & \text{D} & \text{N} \\
    5 &	 \text{N} & \text{X} & \text{D} & \text{N} & \text{D} & \text{D} & \text{N} & \text{X} &
                                                                                                 \text{N} & \text{D} & \text{N} & \text{D} \\
    6 &	 \text{N} & \text{X} & \text{N} & \text{D} & \text{N} & \text{N} & \text{D} & \text{X} &
                                                                                                 \text{N} & \text{D} & \text{N} & \text{D} \\
    7 &	 \text{N} & \text{X} & \text{N} & \text{D} & \text{N} & \text{D} & \text{N} & \text{X} &
                                                                                                 \text{N} & \text{D} & \text{N} & \text{D} \\
    8 &	 \text{N} & \text{X} & \text{D} & \text{N} & \text{D} & \text{D} & \text{N} & \text{X} &
                                                                                                 \text{D} & \text{N} & \text{D} & \text{D} \\
    9 &	 \text{N} & \text{X} & \text{N} & \text{D} & \text{N} & \text{N} & \text{D} & \text{X} &
                                                                                                 \text{D} & \text{N} & \text{D} & \text{D} \\
    10 &	 \text{D} & \text{X} & \text{N} & \text{D} & \text{N} & \text{N} & \text{D} & \text{X} &
                                                                                                         \text{N} & \text{D} & \text{N} & \text{N} \\ \hline
  \end{tabular}
\caption{Bridge domain configurations.  Edges marked D correspond to
  Dirichlet conditions, and those marked $N$ correspond to Neumann
  conditions. The Dirichlet bridge
  configurations correspond to X$=$D, and the Dirichlet bridge
  configurations correspond to X$=$N}\label{tbl:BridgeDomainConfigurations}
\end{table}

\begin{table}
\begin{center}
\begin{tabular}{|cc|cc|cc|}\hline
\multicolumn{2}{|c|}{Isospectral}&\multicolumn{2}{c|}{Dirchlet Case
                                   2}&\multicolumn{2}{c|}{Neumann Case
                                       9}\\
$i$&$\lambda_i$&$i$&$\lambda_i$&$i$&$\lambda_i$\\\hline
1& 4.50351270364&1&4.50348976806&1&4.50318419853\\
  &                          &2&4.50348977820&2&4.50836662912\\\hline
2& 13.5208410401&3&13.5207888798&3&13.4263953994\\
  &                          &4&13.5207889083&4&13.5657193361\\\hline
3& 19.8639263212&5&19.8636968115&5&19.5509676421\\
  &                          &6&19.8636969659&6&19.8768798947\\\hline
4& 30.4933490983&7&30.4931397957&7&30.2012278561\\
  &                          &8&30.4931399453&8&30.5972353211\\\hline
5& 35.1893179474&9&35.1878233714&9&35.0596433246\\
  &                          &10&35.1878245124&10&35.2057946583\\\hline 
6& 46.3221446587&11&46.3208464060&11&45.7623966583\\ 
  &                          &12&46.3208474584&12&46.4364126764\\\hline
\end{tabular}
\end{center}
\caption{\label{tbl:BridgeDomainEigenvalues} The lowest 12 eigenvalues for
two of the Bridge domain configurations compared with the lowest 6
eigenvalues for the isospectral domains.}
\end{table}
\begin{figure}
\centering
\subfloat[$\lambda_9=35.1878233714$.]
{\label{fig:BridgeDomainD2_9}
\includegraphics[width=0.45\textwidth]{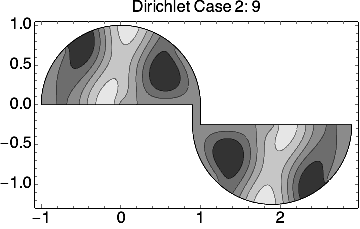}}\quad
\subfloat[$\lambda_{10}=35.1878245124$.]
{\label{fig:BridgeDomainD2_10}
\includegraphics[width=0.45\textwidth]{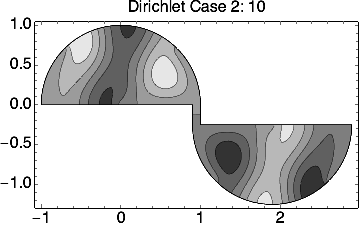}}\\
\subfloat[$\lambda_9=35.0596433246$.]
{\label{fig:BridgeDomainN9_9}
\includegraphics[width=0.45\textwidth]{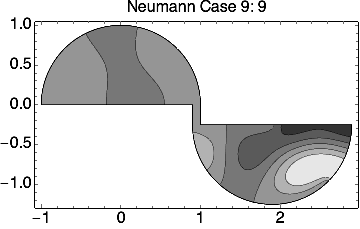}}\quad
\subfloat[$\lambda_{10}=35.2057946583$.]
{\label{fig:BridgeDomainN9_10}
\includegraphics[width=0.45\textwidth]{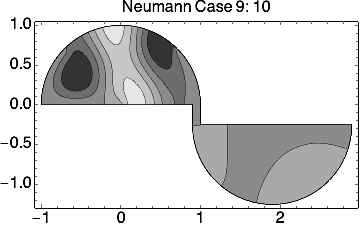}}
\caption{\label{fig:BridgeDomainEigenvectors} Contour plots of $\psi_9$
and $\psi_{10}$ for Case 2 of the Dirichlet bridge (top), and Case 9
of the Neumann Bridge (bottom).  Compare with Figures~\ref{fig:A Modes
  5} and~\ref{fig:B Modes 5}.  Reference eigenvalues are given with
each plot.}
\end{figure}

For our first set of experiments with these two configurations, we
consider the Hausdorff distance, $\mathrm{dist}(\Lambda,\hat\Lambda)$, between the reference eigenvalues
$\Lambda=\{\lambda_1,\ldots,\lambda_r\}$ and the computed eigenvalues
$\hat\Lambda=\{\hat\lambda_1,\ldots,\hat\lambda_r\}$ over a range of
discretizations, for different values of $1\leq r\leq 12$.  More
specifically, we compare this Hausdorff distance with our a posteriori
error estimate of it,
\begin{align}\label{EigenvalueClusterHausdorffEst}
\mathrm{dist}(\Lambda,\hat\Lambda)\doteq \max\left\{\max_{\lambda\in \Lambda}\min_{\hat\lambda\in\hat{\Lambda}}|\lambda-\hat\lambda|\,,\,
\max_{\hat\lambda\in\hat{\Lambda}}\min_{\lambda\in
  \Lambda}|\lambda-\hat\lambda|\right\}\approx \lambda_{\max}(\tilde{H})~,
\end{align}
where $\tilde{H}\in\RR^{r\times r}$ is given in~\eqref{HApprox}.  
This choice of estimate is motivated as follows.  Let
$\lambda_i\in\Lambda$ and $\hat\lambda_j\in\hat\Lambda$ be such that
$\mathrm{dist}(\Lambda,\hat\Lambda)=|\lambda_i-\hat\lambda_j|$, and
let $\hat\psi_j\in V$ be the discrete eigenvector associated with
$\hat\lambda_j$; as usual, we assume $(\hat\psi_k,
\hat\psi_\ell)=\delta_{k\ell}$ for $1\leq k,\ell \leq r$.  Let $S$
be the orthogonal projector onto $E(\lambda_i)$.  We have the
well-known identity
\begin{align*}
\enorm{(I-S) \hat\psi_j}^2-\lambda_i \|(I-S) \hat\psi_j\|_0^2=\hat\lambda_j-\lambda_i~.
\end{align*}
If $\hat\lambda_j\geq\lambda_i$, which is certainly the case if $j\geq
i$, then $\mathrm{dist}(\Lambda,\hat\Lambda)\leq \enorm{(I-S)
  \hat\psi_j}^2$.  Note that if the method is converging, then,
asymptotically, we expect $i=j$. 
In any case, we have $\mathrm{dist}(\Lambda,\hat\Lambda)\leq C
\enorm{(I-S) \hat\psi_j}^2$.
Now, 
\begin{align*}
\enorm{(I-S) \hat\psi_j}^2\leq \max_{\substack{v\in \hat{E} \\ \|v\|_0=1}}\enorm{(I-S) v}^2=\max_{\substack{\mb{v}\in\RR^r\\\mb{v}^t\mb{v}=1}}\mb{v}^tH\mb{v}=\lambda_{\max}(H)~.
\end{align*}
Here, we have identified $v\in \hat{E}$ with its coefficient vector
$\mb{v}\in\RR^r$ with respect to the discrete eigenbasis of $\hat{E}$.
Finally, $\lambda_{\max}(H)$ is estimated by
$\lambda_{\max}(\tilde{H})$.
We highlight the difference between
this sort of estimate and the trace-type
estimate~\eqref{TraceEstimates4b},
\begin{align}\label{EigenvalueClusterTraceEst}
\sum_{j=1}^r(\hat\lambda_j-\lambda_j)\approx \mathrm{trace}(\tilde{H})~.
\end{align}
Note that, for the trace-type estimate, we only need computable
approximations of the diagonal entries of $H$, and these may be
obtained using any number of a posteriori techniques for source
problems.  We have opted for the auxiliary subspace approach discussed
in Section~\ref{sec:AuxiliarySubspace}, because it also naturally
provides approximations of the off-diagonal entries of $H$, thereby
permitting estimates of the
form~\eqref{EigenvalueClusterHausdorffEst}.  Using the reference
eigenvalues computed in a rich finite element space ($p=16$), the
errors and error estimates and error estimates for $2\leq p\leq 12$
and some choices of $r$ are given in
Figure~\ref{EigenvalueClusterHausdorffFig}.  Computations were done
for $2\leq r\leq 12$, and the plots shown in
Figure~\ref{EigenvalueClusterHausdorffFig} are representative.  The
effectivities of the error estimate, over all values $2\leq p,r\leq
12$, ranged between 0.574 and 2.469 for Dirichlet Case 2, and between
0.289 and 3.671 for Neumann Case 9.
\begin{figure}
\centering
\subfloat[Dirichlet bridge, Case 2; $r=2$.]
{\label{fig:Haus_BridgeDomainD2_2}
\includegraphics[width=0.45\textwidth]{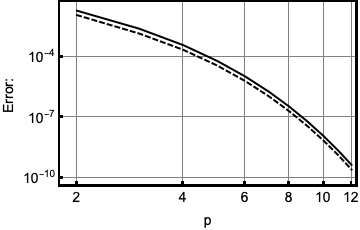}}\quad
\subfloat[Neumann bridge, Case 9; $r=2$.]
{\label{fig:Haus_BridgeDomainN9_2}
\includegraphics[width=0.45\textwidth]{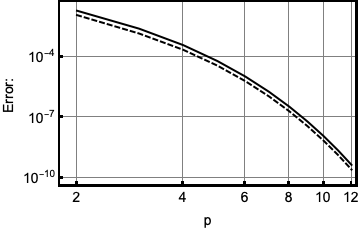}}\\[3pt]
\subfloat[Dirichlet bridge, Case 2; $r=7$.]
{\label{fig:Haus_BridgeDomainD2_7}
\includegraphics[width=0.45\textwidth]{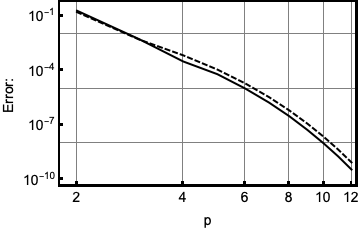}}\quad
\subfloat[Neumann bridge, Case 9; $r=7$.]
{\label{fig:Haus_BridgeDomainN9_7}
\includegraphics[width=0.45\textwidth]{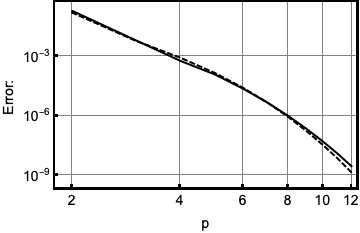}}\\[3pt]
\subfloat[Dirichlet bridge, Case 2; $r=8$.]
{\label{fig:Haus_BridgeDomainD2_8}
\includegraphics[width=0.45\textwidth]{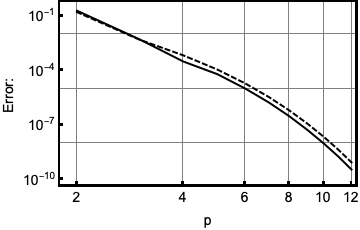}}\quad
\subfloat[Neumann bridge, Case 9; $r=8$.]
{\label{fig:Haus_BridgeDomainN9_8}
\includegraphics[width=0.45\textwidth]{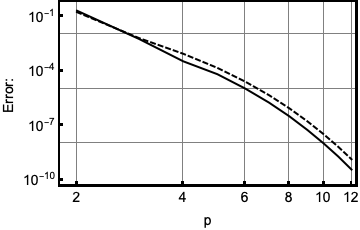}}\\[3pt]
\subfloat[Dirichlet bridge, Case 2; $r=12$.]
{\label{fig:Haus_BridgeDomainD2_12}
\includegraphics[width=0.45\textwidth]{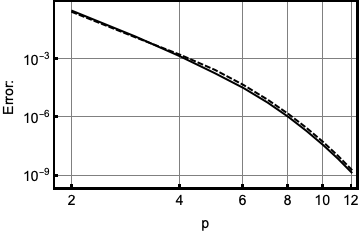}}\quad
\subfloat[Neumann bridge, Case 9; $r=12$.]
{\label{fig:Haus_BridgeDomainN9_12}
\includegraphics[width=0.45\textwidth]{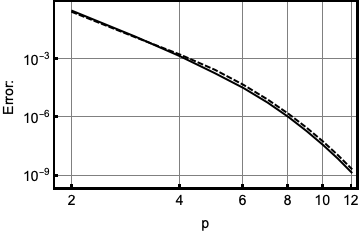}}
\caption{\label{EigenvalueClusterHausdorffFig} Eigenvalue errors
  $\mathrm{dist}(\Lambda,\hat\Lambda)$ (solid) and error estimates
  $\lambda_{\max}(\tilde{H})$ (dashed) for two Bridge domain configurations,
$\Lambda=\{\lambda_1,\ldots,\lambda_r\}$.}
\end{figure}

Letting $E=\mathrm{span}\{\psi_1,\ldots,\psi_r\}$ be the eigenspace of
interest (computed using $p=16$), and
$\hat{E}=\mathrm{span}\{\hat\psi_1,\ldots,\hat\psi_r\}$ be computed
approximations for various discretization parameters $2\leq p\leq 12$,
we consider the subspace gap (cf. Remark~\ref{GapRemark} and~\eqref{SubspaceGap}) and our computable estimate of it,
\begin{align}\label{SubspaceGapHeuristic}
\mathrm{gap}(E,\hat{E})=\sqrt{\lambda_{\max}(G^{-1}H)}\approx \sqrt{\lambda_{\max}(G^{-1}\tilde{H})}~,
\end{align}
where the first equality holds provided $\lambda_{\max}(G^{-1}H)<1$,
as is the case for all of our computations.  Here, we take $S$ to be
the orthogonal projector onto $E$ in the definition of $H$.
The complementary plots to Figure~\ref{EigenvalueClusterHausdorffFig}
for convergence in subspace gap is given in
Figure~\ref{SubspaceGapFig}.  As before, the computable estimates
faithfully reflect the actual subspace gaps, with effectivities
ranging between 0.747 and 0.879
for Dirichlet Case 2, and between 0.527 and 0.874
for Neumann Case 9.  
For comparison, we have also included the trace-type estimate
$\sqrt{\mathrm{trace}(G^{-1}\tilde{H})}$ indicated in~\eqref{TraceEstimates4a} in Figure~\ref{SubspaceGapFig}.
For this estimate, the effectivities ranged between
1.071 and 2.058 for Dirichlet Case 2, and between 1.001 and 2.052 for Neumann
Case 9.
\begin{figure}
\centering
\subfloat[Dirichlet bridge, Case 2; $r=2$.]
{\label{fig:Gap_BridgeDomainD2_2}
\includegraphics[width=0.45\textwidth]{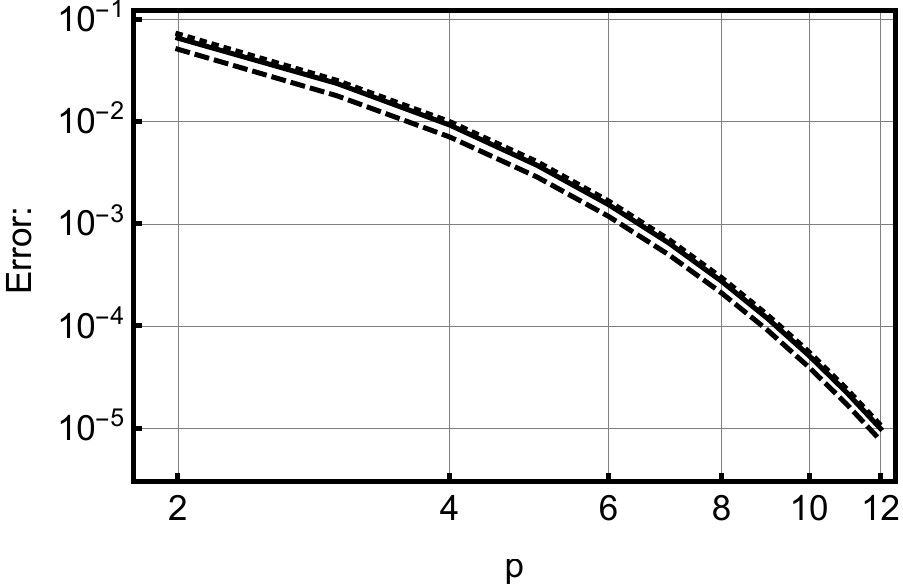}}\quad
\subfloat[Neumann bridge, Case 9; $r=2$.]
{\label{fig:Gap_BridgeDomainN9_2}
\includegraphics[width=0.45\textwidth]{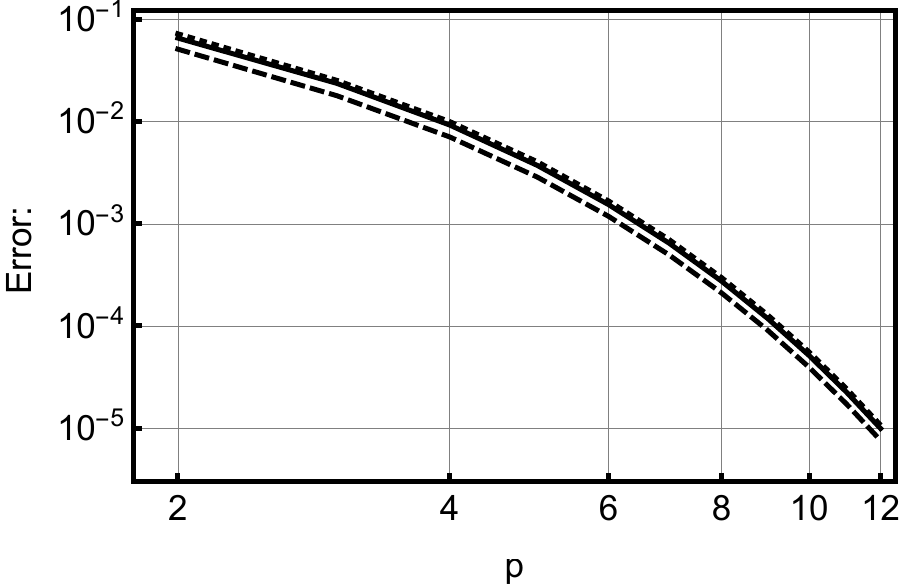}}\\[3pt]
\subfloat[Dirichlet bridge, Case 2; $r=7$.]
{\label{fig:Gap_BridgeDomainD2_7}
\includegraphics[width=0.45\textwidth]{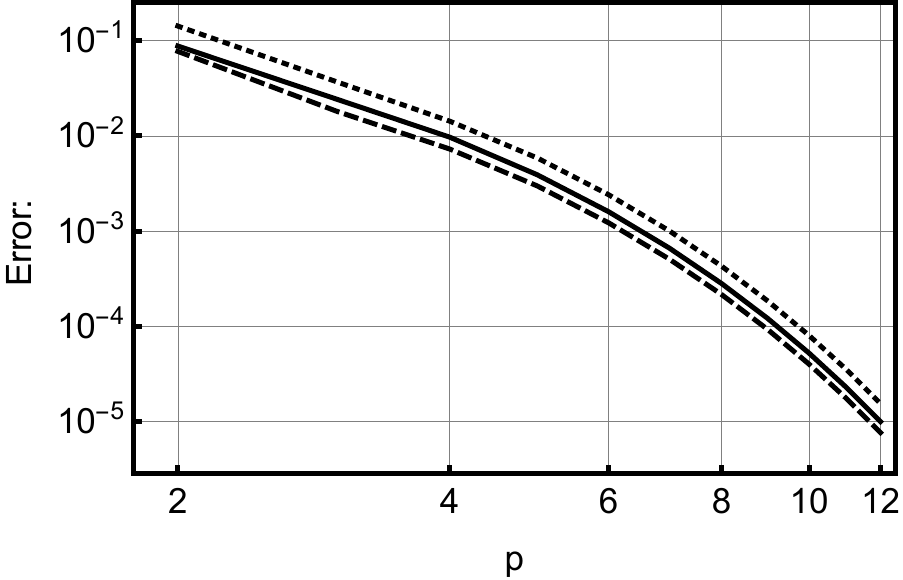}}\quad
\subfloat[Neumann bridge, Case 9; $r=7$.]
{\label{fig:Gap_BridgeDomainN9_7}
\includegraphics[width=0.45\textwidth]{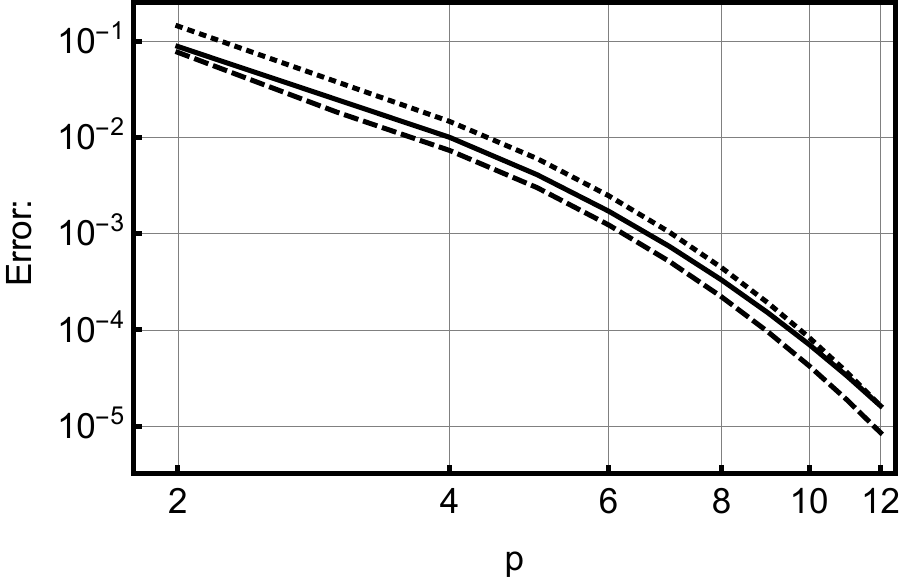}}\\[3pt]
\subfloat[Dirichlet bridge, Case 2; $r=8$.]
{\label{fig:Gap_BridgeDomainD2_8}
\includegraphics[width=0.45\textwidth]{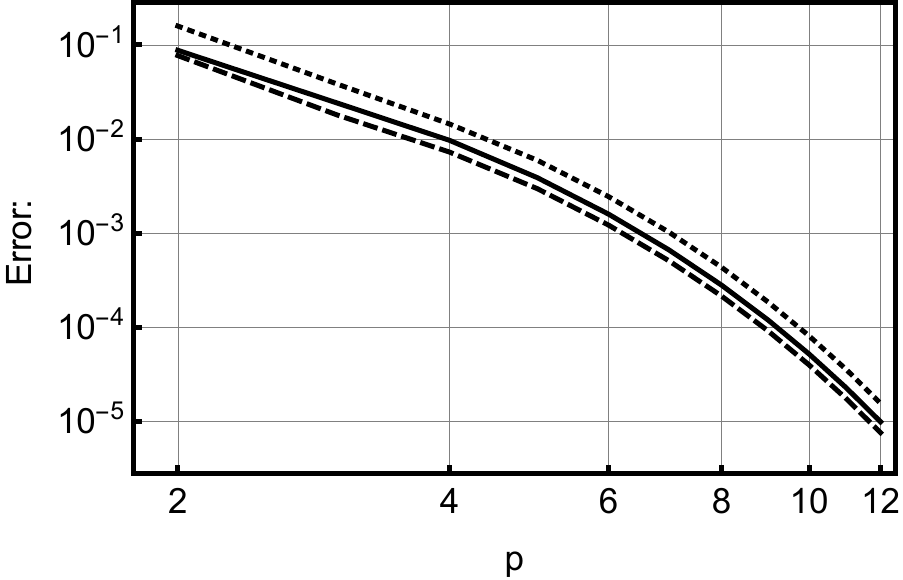}}\quad
\subfloat[Neumann bridge, Case 9; $r=8$.]
{\label{fig:Gap_BridgeDomainN9_8}
\includegraphics[width=0.45\textwidth]{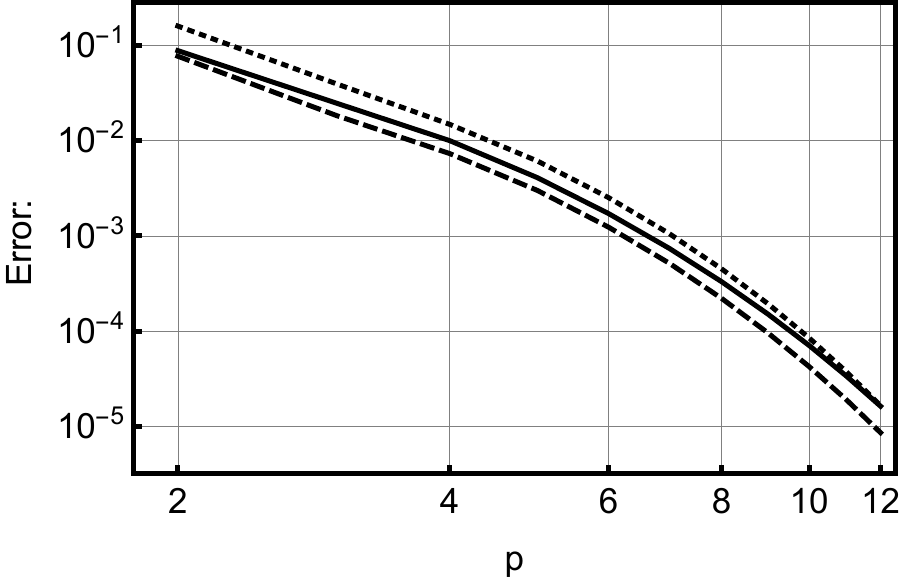}}\\[3pt]
\subfloat[Dirichlet bridge, Case 2; $r=12$.]
{\label{fig:Gap_BridgeDomainD2_12}
\includegraphics[width=0.45\textwidth]{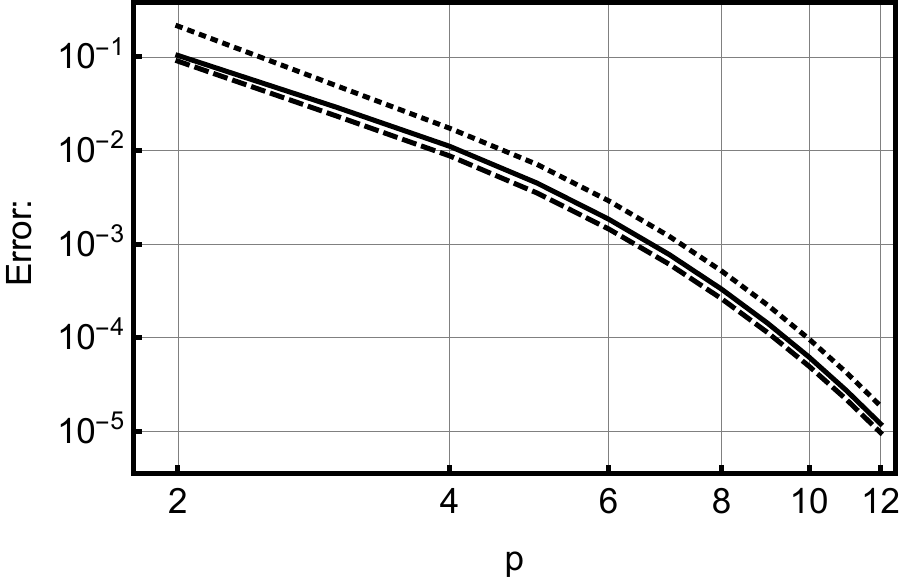}}\quad
\subfloat[Neumann bridge, Case 9; $r=12$.]
{\label{fig:Gap_BridgeDomainN9_12}
\includegraphics[width=0.45\textwidth]{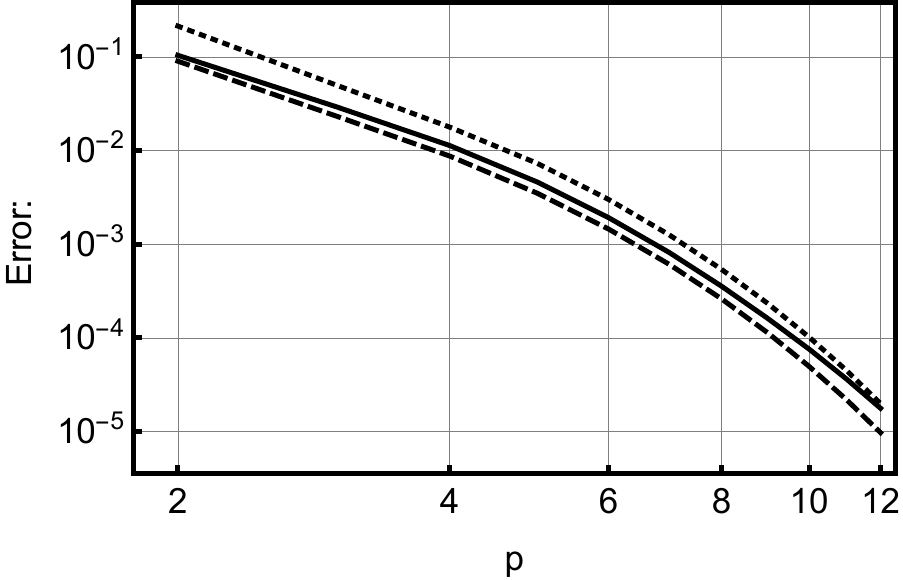}}
\caption{\label{SubspaceGapFig} Subspace gaps $\mathrm{gap}(E,\hat{E})$
(solid) and estimates $\sqrt{\lambda_{\max}(G^{-1}\tilde{H})}$
(dashed) and  $\sqrt{\mathrm{trace}(G^{-1}\tilde{H})}$
(dotted) for two Bridge domain configurations,
$E=\mathrm{span}\{\psi_1,\ldots,\psi_r\}$.}
\end{figure}
\begin{remark}\label{Heuristics}
The estimates in~\eqref{EigenvalueClusterHausdorffEst} and
~\eqref{SubspaceGapHeuristic} employ the heuristics $\lambda_{\max}(\tilde{H})\approx \lambda_{\max}(H)$
and
$\lambda_{\max}(G^{-1}\tilde{H})\approx \lambda_{\max}(G^{-1}H)$.  At
present, we only have emprical evidence that the computable
quantities, i.e. those involving $\tilde{H}$, really do approximate
their typically uncomputable counterparts well.
\end{remark}

\section{Conclusions}\label{sec:Concl}
We have presented computable a posteriori estimates of the subspace
gap between computed and target eigenspaces of the same size, as well
for two measures of error between the corresponding computed and
target eigenvalues---namely, the typical sum of eigenvalue errors and
the Hausdorff distance between the computed and
target eigenvalues.  More rigorous theoretical footing is provided for
the trace-type estimates of the subspace gap~\eqref{TraceEstimates4a}
and sum of eigenvalue errors~\eqref{TraceEstimates4b},
whereas the estimate of the Hausdorff distance between the computed and
target eigenvalues~\eqref{EigenvalueClusterHausdorffEst} and the
alternate estimate of the subspace gap~\eqref{SubspaceGapHeuristic}
is based on the heuristic that the eigenvalues of $H$ and $\tilde{H}$
are close, for which we currently have only empirical support.
These estimates have been tested extensively on a collection of
problems that include both natural clusters of eigenvalues and singularities
in many eigenfunctions.  

\def\cprime{$'$}

\end{document}